\newtheorem{theorem}{Theorem}[section]
\newtheorem{lemma}{Lemma}[section]
\newtheorem{corollary}{Corollary}[section]
\newtheorem{remark}{Remark}[section]
\newtheorem{proposition}{Proposition}[section]
\title{Joint Hitting-Time Densities for Finite State Markov Processes}
\author{
Tomasz R. Bielecki$^{1,}$\thanks{The research of T.R. Bielecki was supported
 by NSF Grant DMS--0604789 and NSF Grant DMS--0908099.}~, \,
Monique Jeanblanc$^{2,}$\thanks{The research of M. Jeanblanc
benefited from the support of the `Chaire March\'es en mutation',
F\'ed\'eration Bancaire Fran\c caise.} ,  
Ali Devin Sezer $^{2,3}$\thanks{The research of Ali Devin Sezer is supported by Rbuce-up
Marie Curie European Project \url{http://www.rbuceup.eu} }
\\\\
$^1$ Department of Applied Mathematics \\
Illinois Institute of Technology \\ Chicago, IL 60616, USA
\\\\
$^2$ Laboratoire Analyse et Probabilit\'es\\Universit\'e d'\'Evry Val d'Essonne  \\
91025 \'Evry Cedex, France \\  \vspace{6pt}
\\
$^3$ Institute of Applied Mathematics,\\ Middle East Technical University,\\ Ankara, Turkey}
\begin{document}
\maketitle
\begin{abstract}
For a finite state Markov process and a finite collection
$\{ \Gamma_k, k \in K \}$ of subsets of its state space,
let $\tau_k$ be the first time the process visits the set $\Gamma_k$.
We derive explicit/recursive formulas for the joint density and tail probabilities
of
the stopping times
$\{ \tau_k, k \in K\}$. The formulas are natural
generalizations of those associated with the jump times of a simple
Poisson process. We give a numerical example and indicate the relevance of our
results to credit risk modeling.
\end{abstract}

\section{Introduction}
One of the basic random variables associated with a Markov process $X$ is its first hitting
time to a given subset of its state space. In the present work we will confine ourselves to finite state
Markov processes. If $X$ has an absorbing state
and all of the states can communicate with it, the distribution of the first hitting time 
to the absorbing state is said to be a {\em phase-type distribution.} Phase-type distributions
can model a wide range of phenomena in reliability theory, communications systems, 
in insurance and finance and go back to Erlang \cite{erlang1917solution}. The literature on these
distributions is immense, see, e.g., 
\cite{david1987least, johnson1989matching, neuts1981matrix,
syski1992passage,
asmussen2003applied, asmussen2010ruin}. 
To the best of our knowledge, \cite{assaf1984multivariate} 
introduced higher dimensional
versions of phasetype distributions. 
Their setup, for the two dimensional case is as follows: 
take two proper subsets $\Gamma_1$ and $\Gamma_2$ of the state space, and assume that with probability
$1$ the process enters their intersection; let $\tau_k$ be the first time the process
enters $\Gamma_k$. The joint distribution of $(\tau_1,\tau_2)$ is a two dimensional phase type distribution.
Higher dimensional versions are defined similarly for a finite collection of subsets $\{\Gamma_k, k \in K\}$
of the state space.

Denote the number of elements in $K$ by $|K|$.
Multidimensional phase type distributions can put nonzero mass on lower dimensional 
subsets of ${\mathbb R}_+^{|K|}$ and the density of the distribution when restricted to these subsets is
called the ``singular part of the distribution.''
To the best of our knowledge, the only result available in the current 
literature 
giving a complete characterization of any multidimensional phase type 
density is the case of two dimensions treated
in \cite{assaf1984multivariate}. 
The same work derives a density formula for the nonsingular part of a phasetype distribution of arbitrary 
finite dimension, which is proved to be correct in \cite{herbertsson2011modelling}.
The main contribution of the present paper is 
Theorem \ref{t:densityt1tk}, which gives an explicit
formula for the joint density (over appropriate subsets of ${\mathbb R}_+^{|K|}$) of the random
vector $\tau\doteq (\tau_k, k\in K )$
covering all possible singular and nonsingular parts.
It turns out that it is simpler
to work with no assumptions on whether $\{\Gamma_k, k\in K\}$ 
are absorbing or not. 
Thus, Theorem \ref{t:densityt1tk} gives the
joint density of a collection of first hitting times for any finite state
 Markov process $X$. The density of phasetype
densities follows as a special case (see Proposition \ref{p:eltwhenabsorbing} for the one dimensional case
and Proposition \ref{p:densforabs} for the general case).

The primary difficulty in the prior literature in identifying the singular part of the density
seems to have stemmed from the method of derivation, which is:
find expressions for tail probabilites and differentiate them. 
As will be seen in subsection \ref{ss:tailprob}
tail probabilities turn out to be more complicated objects than densities. 
Thus, we follow the opposite 
route and compute first the density directly using the following idea: for each $t \in {\mathbb R}_+^{|K|}$, $\tau= t$ is the limit of
a specific and simple set of trajectories 
of the Markov process whose (vanishing) 
probability can be written in terms of 
the exponentials
of submatrices of the intensity matrix.
Subsection \ref{ss:tau1} explains the idea in its simplest form in the derivation of the density
of a single $\tau_k$, given as Theorem \ref{t:densityt1}. 
The same idea extends to multiple 
hitting time in Subsection \ref{ss:multi} 
and the multidimensional density is given as Theorem \ref{t:densityt1tk}.
Subsection \ref{ss:tailprob} 
derives the tail probabilities of $\tau$ using the same idea; the result is 
given as Theorem \ref{t:tailprobs}. 
The formulas for tail probabilities are 
more complex and are best expressed recursively.
We provide a second formula \eqref{e:recfortailprobsalt}
which explicitly states some of the integration that is hidden in the 
completely recursive \eqref{e:recfortailprobs}.

Let 
$\{ {\mathscr F}_u, u \in {\mathbb R}_+ \}$ be the filtration generated by $X$.
The Markov property of $X$ implies that the conditional
density of $\tau$ given ${\mathscr F}_u$ directly follows
from the density formula \eqref{e:densityt1tk}, which we note as
Proposition \ref{p:conditional}. In Section \ref{s:absorbing}  we derive
alternative expressions for the density
and the tail probability formulas for absorbing $\{\Gamma_k\}$ and indicate
the connections between our results and the prior literature.
Section \ref{s:numerical} gives a numerical example. 
We discuss potential applications of our results to credit risk modeling
and point out several directions of future research in the conclusion.

\section{Definitions}
Let 
$\Omega_0$
be a finite set and $X$ a $\Omega_0$-valued continuous time process 
defined over a measurable pair
$(\Omega, {\mathscr F})$ equipped with a family of measures $P_i$, $i \in \Omega_0$,
such that $P_i(X_0 = i) = 1$. Under each $P_i$,
 $X$ is assumed Markov with intensity matrix $\lambda$.
Denote by $P$ 
the collection of measures $\{ P_i, i \in \Omega_0\}$ 
written as a column.  If $\alpha$ is a probability
measure on $\Omega_0$ (written
as a row), we will
denote by $P_\alpha$ the measure $\alpha P =\sum_{i\in \Omega_0} \alpha(i) P_i$ on $(\Omega,{\mathscr F})$. 
It follows from these definitions that under
$P_\alpha$ the initial distribution of $X$ is $\alpha$, i.e., 
$P_\alpha(X_0 = i) =\alpha(i).$
The total jump rate of the process when in state $i$
is
 $-\lambda(i,i) = \sum_{j\neq i } \lambda(i,j)$.
For a finite collection
$\{ \Gamma_k \subset \Omega_0, k \in K \}$
of subsets of $\Omega_0$
define
$
\tau_k \doteq \inf \{u \in (0,\infty) : X_u \in \Gamma_k \}.$
The index set $K$ can be any finite set, but we will
always take it to be a finite subset of the integers.
In the next section we derive formulas for
the (conditional) joint density and tail probabilities of 
the stopping times $\{ \tau_k, k\in K \}.$
To ease notation, unless otherwise noted,
we will assume throughout that 
 $\Omega_0 - \cup_{k \in K} \Gamma_k$ is not empty and that the initial distribution
$\alpha$ puts its full mass on this set,
see Remark
\ref{rr:alpha}  and subsection \ref{ss:positivemasongamma}
for comments on how one removes this assumption. 

For a set $a$, $a^c$ will mean its complement and if it is finite $|a|$ 
will mean the number of elements in it.
For two subsets $a,b \subset \Omega_0$ define  $\bm{\lambda}(a,b) =\eta$
as 
\begin{equation}\label{e:deflambda}
 \eta(i,j) \doteq \begin{cases}
\lambda(i,j) \text{ if } i\in a, j \in b, \\
 0, \text{ otherwise.}
\end{cases}
\end{equation}
For $a \subset \Omega_0$, we will write $\bm{\lambda}(a)$ for $\bm{\lambda}(a,a).$
We note $\lambda = \bm{\lambda}(\Omega_0)$. 

Throughout we will need to refer to zero matrices and vectors of various
dimensions, we will write all as $0$; the dimension will always be clear
from the context.

For $a \subset \Omega_0$, 
take the identity matrix $I \in {\mathbb R}^{|\Omega_0| \times |\Omega_0|}$ and
replace its rows whose indices appear in $a^c$ with the $0$ vector 
and call the resulting
matrix $I_a$, e.g., $I_{\Omega_0}$ is $I$ itself and $I_\emptyset$ is
the zero matrix. 
The matrix
$I_a$ has the following action on matrices and vectors:
\begin{lemma}\label{l:Iaaction}
Let $n$ be a positive integer.
For any $M \in {\mathbb R}^{|\Omega_0| \times n}$, $I_a M$ is the
same as $M$ except that its rows whose indices are in 
$a^c$ are replaced by $0$ (a zero row vector of dimension $n$), i.e.,
if $r_i$ is the $i^{th}$ row of $M$ then the $i^{th}$ row of 
$I_a M$ is $r_i$ 
if $i \in a$ and $0$ otherwise.
\end{lemma}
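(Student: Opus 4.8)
The plan is to prove the claim by a direct computation with matrix entries, since the statement is essentially a bookkeeping identity about left multiplication by a matrix each of whose rows is either a standard basis vector or the zero vector.

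First I would record the entrywise description of $I_a$. By its definition as the identity matrix $I$ with the rows indexed by $a^c$ replaced by zero rows, we have $(I_a)_{ik} = \delta_{ik}$ whenever $i \in a$, and $(I_a)_{ik} = 0$ for every $k$ whenever $i \in a^c$, where $\delta$ denotes the Kronecker delta. Next, fix $M \in {\mathbb R}^{|\Omega_0| \times n}$ and compute the $(i,j)$ entry of $I_a M$ for an arbitrary index $i$ and $j \in \{1,\dots,n\}$: by the definition of matrix multiplication, $(I_a M)_{ij} = \sum_{k} (I_a)_{ik} M_{kj}$. Splitting into the two cases: when $i \in a$ the sum collapses to the single term $M_{ij}$, and when $i \in a^c$ every summand vanishes, so the entry is $0$. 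Reading this off one row at a time yields exactly the asserted description — the $i$-th row of $I_a M$ equals $r_i$ for $i \in a$ and equals the zero row of length $n$ for $i \in a^c$.

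There is no genuine obstacle here; the only point requiring a little care is purely notational, namely that $I_a$ has been indexed by the abstract finite set $\Omega_0$ rather than by $\{1,\dots,|\Omega_0|\}$. To make the sums above literally meaningful one fixes once and for all an enumeration identifying $\Omega_0$ with $\{1,\dots,|\Omega_0|\}$ (or simply agrees to let matrix indices range over $\Omega_0$ itself), after which the Kronecker-delta computation goes through verbatim. Finally, taking $n = 1$ in the argument gives the corresponding statement for the action of $I_a$ on column vectors, so nothing further is needed.
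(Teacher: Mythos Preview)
Your proof is correct and is exactly the kind of direct verification from the definitions that the paper has in mind; indeed the paper omits the proof entirely, stating only that it ``follows from the definitions.'' Your entrywise computation with the Kronecker delta is the natural way to spell this out, and your remark on the indexing convention is appropriate but not essential.
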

The proof follows from the definitions and is omitted.
Right multiplication by $I_a$ acts on the columns, i.e.,
$M I_a$ is the same as
$M$ except now that the columns with indices in $a^c$ are set to zero.
As an operator on
 $|\Omega_0|$ dimensional vectors, $I_a$ replaces with $0$ 
the coordinates of the vector whose indices are in $a^c$.

It follows from the definition \eqref{e:deflambda} 
of $\bm{\lambda}$ and 
Lemma \ref{l:Iaaction}
that
\begin{equation}\label{e:lambdamatrix}
\bm{\lambda}(a,b) = I_{a} \lambda I_{b}.
\end{equation}
The operation of setting some of the columns of the
identity matrix to zero commutes with set operations, i.e., one has
\begin{equation}\label{e:commutation}
I_{a \cap b} = I_a I_b, ~~ I_{a \cup b} = I_a + I_b - I_a I_b, ~~I_{a^c} = I - I_a.
\end{equation}
Using this and Lemma \ref{l:Iaaction} one can write
any formula involving $\bm{\lambda}$ in a number of ways. For example,
$\bm{\lambda}(a^c,a)$ can be written as
$I_{a^c} \lambda I_a = (I - I_{a}) \lambda I_{a} = \lambda I_a - 
I_a \lambda I_a$ or 
$\bm{\lambda}(a, b\cap c)$ as 
$I_a \lambda I_{b\cap c} = I_a \lambda I_b I_c = I_a \lambda I_c I_b.$

\subsection{Restriction and extension of vectors and $\tau$ as a random function}
For any nonempty finite set $a$ let ${\mathbb R}^a$ be the set of functions
from $a$ to ${\mathbb R}$. ${\mathbb R}^{a}$ is the same
as ${\mathbb R}^{|a|}$, except for the way we index the components
of their elements. For two sets $a \subset b$ and $y \in {\mathbb R}^b$
denote 
$y$'s restriction to $a$ by $y|_a \in {\mathbb R}^a$:
\begin{equation}\label{e:restriction}
y|_a(i) \doteq y(i) \text{ for } i \in a.
\end{equation}
The same notation continues to make sense for $a$ of the form $b \times c$,
and therefore can be used to write submatrices of a matrix. Thus, for
$M \in {\mathbb R}^{\Omega_0 \times \Omega_0}$ and nonempty
$b,c \subset \Omega_0$
\begin{equation}\label{e:submatrix}
M|_{b\times c}
\end{equation}
will mean
the submatrix of $M$ consisting of its components $M(i,j)$ with 
$(i,j) \in b \times c.$ For $b=c$ we will write $M|_b.$

For $ x \in {\mathbb R}^a$
denote by $x|^b \in {\mathbb R}^b $ the following extension of $x$ to $b$:
\begin{equation}\label{e:extension}
x|^b(i) = \begin{cases}
		x(i) \text{ for }  i \in a, \\
		0, \text{ otherwise}.
\	\end{cases}
\end{equation}

The random vector $\tau = (\tau_k, k\in K)$ can also
be thought of as a random function on $K$, and we will often do so.
Thus for $ A \subset K$, we may write $\tau|_A$ to denote
$(\tau_k, k \in A)$. 
The advantage of the notation $\tau|_A$
is that we are able to index its components with elements of $A$ rather than
with the integers $\{1,2,3,...,|A|\}$; this proves useful when stating the recursive formulas and proofs below.
 
\subsection{Subpartitions of ${K}$}\label{ss:subpartitions}
The key aspect of the distribution of $\tau$, already referred to
in the introduction, is that it may put nonzero mass on lower dimensional 
subsets of ${\mathbb R}^{|K|}_+.$
This happens, for example,
when $X$ can hit $\cap_{k \in  A} \Gamma_k$ 
before $\cup_{k\in A} \Gamma_k - \cap_{k \in A} \Gamma_k$
with positive probability for some $A \subset K$ with $|A|>1$.
As this example suggests, one can divide ${\mathbb R}^{|K|}_+$ into a number of regions and associate with each an intersection of
events of the form ``$X$ hits $a$ before $b$''
for appropriate subsets of $a,b \subset \Omega_0$.
To write down the various regions and the corresponding events
 we will use  subpartitions of ${K}$,
which we introduce now.

Recall that $K$ is the set of indices of the stopping times 
$\{\tau_k\}$ or
equivalently the sets $\{\Gamma_k\}$.
We call an ordered sequence of disjoint nonempty subsets of ${K}$ a {\em subpartition} of ${K}$. If the union of all
elements of a subpartition is ${K}$ then we call it a partition.
For example,
$( \{1,2\}, \{3\}, \{4\})$  
 [$(\{1,2\},\{4\})$]
is a [sub]partition of $\{1,2,3,4\}$.
Denote by $|s|$ the number of components in the subpartition
and 
by $s(n)$ its $n^{th}$ component, $n \in \{1,2,3,...,|s|\}.$
In which order the sets appear in the partition matters. 
For example,
$( \{3\},  \{4\}, \{1,2\})$ is different from the previous partition.
In the combinatorics literature this is often called
an ``ordered partition,'' see ,e.g., \cite{stanley2011enumerative}. 
Only ordered partitions appear in the present
work and therefore to be brief we always assume every 
subpartition to have a definite order
and drop the adjective ``ordered.'' 
With a slight abuse of notation we will write
$s(n_1,n_2)$ to denote the ${n_2}^{nd}$ element of the ${n_1}^{st}$ set
in the partition. 

Two subpartitions $s_1$ and $s_2$ are said to be disjoint if $\cup_{n} s_1(n)$ and
$\cup_n s_2(n)$ are disjoint subsets of ${K}$.
For a given disjoint pair of subpartitions $s_1$, $s_2$
let $s_1 \cup s_2$
be their concatenation, for example $( \{1,2\},\{3\}) \cup (\{4,6\} ) =
( \{1,2\},\{3\},\{4,6\}).$ 

For a subpartition $s$ let $Ls$ be its left shift, i.e.,
 $L( s(1),s(2),...,s(|s|))$ $= (s(2),s(3),...,s(|s|)).$
Let
$L^m$ denote left shift $m$ times. Similarly
for $ t \in {\mathbb R}^n$, $ n > 1$ let $Lt \in {\mathbb R}^{n-1}$ 
be its left shift.
For $t \in {\mathbb R}^n$ and $r \in {\mathbb R}$ let $t-r$ denote
$(t_1-r,t_2 -r,..., t_n - r).$

Given a subpartition $s$ and an index $0  < n \le |s|$,
let
$s - s(n)$ be
the subpartition which is the same as $s$ but without
$s(n)$, e.g., $( \{1,2\}, \{3\}, \{4,7\} )$ $- \{3\} = (\{1,2\},\{4,7\}).$ 
Given a subpartition $s$ and a nonempty $A \subset {K} - \cup_{n=1}^{|s|} s(n)$
let $s + A$ denote the subpartition
that has all the sets is $s$ and $A$, e.g., $( \{1,2\}, \{3\}) + \{4,7\}  = ( \{1,2\}, \{3\}, \{4,7\}).$ 

Define
\[
S(s) \doteq  \bigcup_{n=1}^{|s|} \bigcup_{k \in s(n) } \Gamma_k.
\]
For a {\em partition} $s$ define
\begin{align*}
{\mathbb R}_+^K \supset R_s \doteq &
\left( \bigcap_{n=1}^{|s|} 
\bigcap_{k_1, k_2 \in s(n) } 
\left\{ t_{ k_1 } = t_{k_2} \right\}\right)
    \cap \left\{ t_{s(1,1)}   < t_{ s(2,1)} < \cdots < t_{s(|s|,1)}\right\}.
\end{align*}
For example, for $s=(\{1,4\}, \{2\},\{3,5,6\})$ 
\[
R_s= \{ t: t_1 = t_4 < t_2 < t_3= t_5 = t_6 \}.
\]
Let ${\mathcal S}$ be the set of all partitions of ${K}$. 
The sets
$R_s, s \in {\mathcal S}$,  are disjoint and their union is ${\mathbb R}_+^K$. 
It turns out that for each $s \in {\mathcal S}$, the distribution of $\tau$ restricted to $R_s$ is absolutely continuous
with respect to the $|s|$ dimensional Lebesgue measure on $R_s$. Our main result, given as Theorem \ref{t:densityt1tk} below,
 is a formula for this density. 

\section{The density of first hitting times}
We start by deriving the density of a single hitting time over sets of sample paths
that avoid a given subset of the state space until the hitting occurs.
\subsection{Density of one hitting time}\label{ss:tau1}
For any set $d\subset \Omega_0$ and $u \in {\mathbb R}_+$ define
$p_{\alpha,d}^u(j)  \doteq P_\alpha( X_u = j, X_v \notin d, v \le u )$
and $ p^u_{d}(i,j) \doteq P_i( X_u = j, X_v \notin d, v \le u ).$
In addition set $p^u(i,j) \doteq p^u_\emptyset(i,j) =  P_i( X_u = j)$.
The distribution $p_{\alpha,d}^u$ is a row vector and 
$p^u_d$ and $p^u$ are $|\Omega_0| \times |\Omega_0|$ matrices.
Conditioning on the initial state
implies
$p_{\alpha,d}^u  = \alpha p^u_d.$
It follows from the definition of $X$, $\lambda$ and $p^h$ that
\begin{equation}\label{e:intensities}
\lim_{h\rightarrow 0} p^h(i,j)/h = \lambda(i,j),
\end{equation}
for $(i,j) \in d^c \times d^c.$ 
\begin{lemma}\label{l:avoidG1}
Let $\alpha$ be an initial distribution on $\Omega_0$ with $\alpha|_d = 0$. Then
\begin{equation}\label{e:toprovel1}
p_{\alpha,d}^u = \alpha e^{u \bm{\lambda}(d^c) }.
\end{equation}
\end{lemma}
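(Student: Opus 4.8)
The plan is to show that the row-vector function $u \mapsto p_{\alpha,d}^u$ satisfies the linear ODE $\frac{d}{du} p_{\alpha,d}^u = p_{\alpha,d}^u\, \bm{\lambda}(d^c)$ with initial condition $p_{\alpha,d}^0 = \alpha$, and then invoke uniqueness of solutions of linear ODEs to conclude $p_{\alpha,d}^u = \alpha e^{u\bm{\lambda}(d^c)}$. The initial condition is immediate since $P_\alpha(X_0 = j, X_v \notin d, v \le 0) = P_\alpha(X_0 = j) = \alpha(j)$; here the hypothesis $\alpha|_d = 0$ is what makes the event $\{X_v \notin d, v \le 0\} = \{X_0 \notin d\}$ have full mass under $P_\alpha$, so no mass is lost at time $0$.

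The main work is the ODE. First I would establish the semigroup-type identity $p_{\alpha,d}^{u+h} = p_{\alpha,d}^u\, p_d^h$ by conditioning on $\mathscr{F}_u$ (or just on $X_u$) and using the Markov property: the path must avoid $d$ on $[0,u]$ — contributing the factor $p_{\alpha,d}^u(i)$ for the state $i$ reached at time $u$ — and then, restarted from $i \notin d$, must avoid $d$ on $[u,u+h]$ and land at $j$, contributing $p_d^h(i,j)$. Note that $p_{\alpha,d}^u$ is supported on $d^c$ (it is identically zero on $d$), so only indices $i \in d^c$ matter in the product. Then I would write $p_{\alpha,d}^{u+h} - p_{\alpha,d}^u = p_{\alpha,d}^u (p_d^h - I)$, divide by $h$, and let $h \to 0$. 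Restricted to $i, j \in d^c$, the limit $\lim_{h\to 0}(p_d^h(i,j) - \delta_{ij})/h$ equals $\lambda(i,j)$: for $i\neq j$ this is \eqref{e:intensities}, since for small $h$ reaching $j\neq i$ while avoiding $d$ is dominated by a single jump $i \to j$ (both in $d^c$), and the diagonal entry follows from $\sum_{j} p_d^h(i,j) \le 1$ together with the fact that the total rate of leaving $i$ toward $d^c$ is $-\lambda(i,i) - \sum_{j\in d}\lambda(i,j)$, which is exactly $-\bm{\lambda}(d^c)(i,i) = -(I_{d^c}\lambda I_{d^c})(i,i)$. In matrix form, using $p_{\alpha,d}^u = p_{\alpha,d}^u I_{d^c}$ and the representation \eqref{e:lambdamatrix}, the derivative is $p_{\alpha,d}^u\, I_{d^c}\lambda I_{d^c} = p_{\alpha,d}^u\, \bm{\lambda}(d^c)$.

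The step I expect to be the main obstacle is the careful justification of the entrywise derivative of $p_d^h$ on $d^c \times d^c$ and the uniform control needed to pass from the difference quotient to the matrix ODE — in particular, ruling out contributions from two-or-more-jump excursions that leave and return within $d^c$ while possibly passing through $d$ (these are $o(h)$ because they require at least two jumps, but this should be said), and confirming that the diagonal limit is exactly $\bm{\lambda}(d^c)(i,i)$ rather than $\lambda(i,i)$. Once the ODE $\frac{d}{du}p_{\alpha,d}^u = p_{\alpha,d}^u\,\bm{\lambda}(d^c)$ is in hand with $p_{\alpha,d}^0 = \alpha$, the closed form \eqref{e:toprovel1} is the unique solution, completing the proof. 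An alternative, essentially equivalent, route is to kill the process on entering $d$: the sub-Markov process on $d^c$ has generator $\bm{\lambda}(d^c)|_{d^c}$, whose transition semigroup is $e^{u\bm{\lambda}(d^c)|_{d^c}}$, and $p_{\alpha,d}^u$ is exactly $\alpha$ pushed through this semigroup and extended by zero on $d$.
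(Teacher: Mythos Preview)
Your approach is correct and is the forward-equation counterpart to the paper's backward approach. The paper conditions on the \emph{first jump time} $\nu_1$ of $X$ to obtain a Volterra-type integral equation for $p_d^u(i,j)$ on $d^c\times d^c$ (display \eqref{e:ode}), which is solved by $e^{u\lambda|_{d^c}}$; you instead condition at the fixed time $u$ to get the semigroup identity $p_{\alpha,d}^{u+h}=p_{\alpha,d}^u\, p_d^h$ and pass to the forward equation. Both routes land on the same linear ODE and are standard; the paper's first-jump decomposition trades the $o(h)$ bookkeeping for exact exponential holding-time integrals, while your semigroup argument makes the killed-process interpretation on $d^c$ more transparent, as you note in your alternative.

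One caution on the step you yourself flagged as the obstacle: for $i\in d^c$ one has $\bm{\lambda}(d^c)(i,i)=(I_{d^c}\lambda I_{d^c})(i,i)=\lambda(i,i)$, so the diagonal limit \emph{is} $\lambda(i,i)$, not the quantity $-\lambda(i,i)-\sum_{j\in d}\lambda(i,j)$ your sentence equates with $-\bm{\lambda}(d^c)(i,i)$. The correct accounting is simply $p_d^h(i,i)=e^{\lambda(i,i)h}+O(h^2)$ (the no-jump probability from $i\in d^c$), giving $(p_d^h(i,i)-1)/h\to\lambda(i,i)$; any jump out of $i$, whether into $d$ or into $d^c$, removes mass from the diagonal. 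This slip does not affect the validity of your plan, since the two quantities you conflated happen to agree on $d^c$, but the justification you wrote for the diagonal entry should be corrected.
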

\begin{proof}
We only need to modify slightly the proof of \cite[Theorem 3.4, page 48]{asmussen2003applied}. 
The steps are: 1) write down a linear ordinary differential
equation (ODE) that the matrix valued function $ u\rightarrow p^u_{d}|_{d^c}$,
$u \in {\mathbb R}_+$, satisfies, 2) the 
basic theory of ODEs will tell us
that the unique solution is $u\rightarrow e^{u\bm{\lambda}(d^c)}|_{d^c}$.

Let $\nu_1$ be the first jump time of $X$; for $X_0 = i\in d^c$, 
$\nu_1$
is exponentially distributed with rate $-\lambda(i,i)>0$.
Conditioning on $\nu_1$ gives
{\small
\begin{equation}\label{e:ode}
p_{d}^u(i,j) = 
P_i(\nu_1 > u) I(i,j) + \int_0^u \lambda(i,i) 
e^{\lambda(i,i) v} \left( \sum_{l \in d - \{i\}}
\frac{\lambda(i,l)}{\lambda(i,i)} p_{d}^{u-v}(l,j) \right) dv
\end{equation}
}
for $(i,j) \in d^c \times d^c$.
In comparison with the aforementioned proof
we have only changed the index set of the last sum to ensure that only paths
that keep away from $d$ are included. The unique solution of
\eqref{e:ode} equals
$p_d^u|_{d^c} = e^{ u\lambda|_{d^c}} = e^{u\bm{\lambda}(d^c)}|_{d^c}.$
The equality
\eqref{e:toprovel1} follows from this and $\alpha|_d =0.$
\end{proof}

\begin{remark}{\em
Probabilities that concern sample paths that stay away from a given
set are called ``taboo probabilities'' in \cite[Section 1.2]{syski1992passage};
\cite[Equation (F), page 28]{syski1992passage} is equivalent to \eqref{e:ode}.
}
\end{remark}

The next result (written in a slightly different form) 
is well known, see, e.g.,
\cite{neuts1975probability,assaf1984multivariate}. We record it
as a corollary here and will use it in 
subsection \ref{ss:linkstoearlier}
where we indicate the connections of our results 
to prior literature.
Let ${\bf 1}$ be the $|\Omega_0|$ dimensional column vector with all components
equal to $1$.
\begin{corollary}
For $\tau_d \doteq \inf \{ u: X_u \in d \}$, and an initial 
distribution with $\alpha|_d = 0$
\begin{equation}\label{e:tail1}
P_\alpha( \tau_d > u) = \alpha e^{u\bm{\lambda}(d^c) } {\bf 1}.
\end{equation}
\end{corollary}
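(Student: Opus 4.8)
The plan is to recognize the tail event $\{\tau_d > u\}$ as exactly the collection of trajectories that keep away from $d$ up to time $u$, and then sum the already-computed sub-probability (taboo) vector $p^u_{\alpha,d}$ over all terminal states. First I would note that, because $\alpha|_d = 0$, under $P_\alpha$ the process starts outside $d$ with probability one, so $X_0 \notin d$ a.s. Since $X$ has piecewise-constant (right-continuous) paths, the event $\{\tau_d > u\}$ coincides, up to a $P_\alpha$-null set, with $\{X_v \notin d \text{ for all } v \le u\}$: indeed $\tau_d > u$ says that the process has not entered $d$ at any time in $(0,u]$, and together with $X_0 \notin d$ this is the same as avoiding $d$ on all of $[0,u]$.

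Next I would decompose this event according to the state occupied at time $u$, obtaining
\[
P_\alpha(\tau_d > u) = \sum_{j \in \Omega_0} P_\alpha(X_u = j,\, X_v \notin d,\, v \le u) = \sum_{j \in \Omega_0} p^u_{\alpha,d}(j) = p^u_{\alpha,d}\, {\bf 1},
\]
where the last equality merely records that pairing the row vector $p^u_{\alpha,d}$ with the all-ones column vector ${\bf 1}$ returns its total mass.

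Finally, Lemma~\ref{l:avoidG1} applies verbatim, its hypothesis $\alpha|_d = 0$ being precisely our standing assumption, and gives $p^u_{\alpha,d} = \alpha e^{u\bm{\lambda}(d^c)}$. Substituting this into the previous display yields $P_\alpha(\tau_d > u) = \alpha e^{u\bm{\lambda}(d^c)}\, {\bf 1}$, which is \eqref{e:tail1}. There is essentially no hard step here: the only point requiring a moment's care is the identification of $\{\tau_d > u\}$ with the taboo event $\{X_v \notin d,\, v \le u\}$, i.e. verifying that the boundary time $v = u$ and the behaviour at $v = 0$ cause no discrepancy, and this is immediate from the right-continuity of the finite-state paths together with $\alpha|_d = 0$.
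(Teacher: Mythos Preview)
Your proof is correct and follows essentially the same approach as the paper: identify $\{\tau_d>u\}$ with the taboo event $\{X_v\notin d,\, v\le u\}$, decompose according to $X_u$, and apply Lemma~\ref{l:avoidG1}. The only cosmetic difference is that the paper sums over $j\in d^c$ rather than $j\in\Omega_0$, which is immaterial since $p^u_{\alpha,d}(j)=0$ for $j\in d$.
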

\begin{proof}
\begin{align*}
P_\alpha( \tau_d > u) = \sum_{j \in d^c} 
P_\alpha(X_u = j, X_v \notin d, v \le u) = \alpha e^{u\bm{\lambda}(d^c) } {\bf 1},
\end{align*}
where the last equality is implied by \eqref{e:toprovel1}.
\end{proof}

\begin{remark}\label{rr:alpha}
{\em
One must modify \eqref{e:tail1} to
\[
P_\alpha( \tau_d >  u) = 
\alpha 
I_{d^c} 
e^{u\bm{\lambda}(d^c) } {\bf 1},~~~
P_\alpha(\tau_d = 0)= \alpha I_d {\bf 1}
\]
if one does not assume $\alpha|_d = 0$.
}
\end{remark}

\begin{theorem}\label{t:densityt1}
Let $a,b \subset \Omega_0$, $a\cap b = \emptyset$ be given.
Define $\tau_a \doteq\inf \{u: X_u \in a \}$ and set $d = a \cup b.$
Then
\begin{equation}\label{e:dist0}
\frac{d}{du} \left[ P_\alpha(\tau_a \in (0,u], X_v \notin b, v \le \tau_a ) \right]= 
\alpha e^{u\bm{\lambda}(d^c)} \bm{\lambda}(d^c,a) {\bf 1},
\end{equation}
where $\alpha$ is the initial distribution of $X$ with
 $\alpha|_{d} = 0$.
\end{theorem}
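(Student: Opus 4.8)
The strategy is to decompose the event $\{\tau_a \in (0,u], X_v \notin b, v \le \tau_a\}$ according to the last position $i$ of the process just before it jumps into $a$, and to use Lemma~\ref{l:avoidG1} to control the probability of reaching $i$ while avoiding $d = a \cup b$. Concretely, for the hitting of $a$ to occur in the time interval $(v, v+dv]$ from a state $i \in d^c$, the process must (i) stay in $d^c$ up to time $v$ and be at some state $i$ at time $v$, and (ii) jump from $i$ directly into $a$ during $(v, v+dv]$. Step (i) has probability $p^v_{\alpha,d}(i) = (\alpha e^{v\bm{\lambda}(d^c)})(i)$ by \eqref{e:toprovel1}, and step (ii) contributes, to first order in $dv$, the rate $\sum_{j\in a}\lambda(i,j)\, dv$. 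Writing this as a matrix expression, $\sum_{i \in d^c} (\alpha e^{v\bm{\lambda}(d^c)})(i) \sum_{j \in a} \lambda(i,j) = \alpha e^{v\bm{\lambda}(d^c)} \bm{\lambda}(d^c,a){\bf 1}$, since $\bm{\lambda}(d^c,a) = I_{d^c}\lambda I_a$ picks out exactly the entries $\lambda(i,j)$ with $i \in d^c$, $j \in a$, and right-multiplication by ${\bf 1}$ sums over $j$.

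First I would make the above rigorous by writing an exact identity rather than an infinitesimal one. Condition on $\nu_1$, the first jump time after having reached state $i$ at time $v$; more cleanly, mimic the ODE argument of Lemma~\ref{l:avoidG1}: let $F(u) \doteq P_\alpha(\tau_a \in (0,u], X_v \notin b, v \le \tau_a)$ and obtain an integral equation for $F$ by conditioning on the configuration of $X$ at a small time $h$, or directly by conditioning on $(\tau_a, X_{\tau_a-})$. The cleanest route: the event in question, intersected with $\{X_{\tau_a} = j\}$ for $j \in a$ and $\{X_{\tau_a-} = i\}$ for $i \in d^c$, occurs iff the process stays in $d^c$, is at $i$ at the instant just before $\tau_a$, and then jumps to $j$. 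Integrating the joint ``density'' $p^v_d(\cdot,i)$ of being at $i$ at time $v$ without having hit $d$, against the jump rate $\lambda(i,j)$, gives
\begin{equation*}
F(u) = \int_0^u \sum_{i \in d^c} p^v_{\alpha,d}(i) \sum_{j \in a}\lambda(i,j)\, dv = \int_0^u \alpha e^{v\bm{\lambda}(d^c)}\bm{\lambda}(d^c,a){\bf 1}\, dv,
\end{equation*}
using Lemma~\ref{l:avoidG1} for the last step. Differentiating in $u$ yields \eqref{e:dist0}.

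To justify the displayed integral identity carefully, I would argue as follows. By the strong Markov property applied at $\tau_a$, or by the standard construction of a continuous-time Markov chain as a jump process, the probability that the process stays in $d^c$ on $[0, v]$, is at $i$ at time $v$, and experiences its next jump in $(v, v+dv]$ landing in $a$, equals $p^v_{\alpha,d}(i)\big(\sum_{j\in a}\lambda(i,j)\big)dv + o(dv)$, uniformly in $v$ on compacts because $\Omega_0$ is finite and all rates are bounded. Summing over the finitely many $i \in d^c$ and $j \in a$ and integrating over $v \in [0,u]$ gives exactly $F(u)$, since every trajectory contributing to the event has a well-defined last state $i \in d^c$ before hitting $a$ and a well-defined hitting time. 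Substituting $p^v_{\alpha,d} = \alpha e^{v\bm{\lambda}(d^c)}$ from Lemma~\ref{l:avoidG1} and recognizing $\sum_{i\in d^c}(\cdot)(i)\sum_{j\in a}\lambda(i,j) = (\cdot)\,\bm{\lambda}(d^c,a){\bf 1}$ via \eqref{e:lambdamatrix} completes the derivation.

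**Main obstacle.** The technically delicate point is the passage from the infinitesimal heuristic to a rigorous integral formula — i.e., justifying that the $o(dv)$ terms (corresponding to two or more jumps in a short interval, or to a jump $i \to d^c \setminus a$ immediately followed by a jump into $a$) genuinely vanish in the limit and do not contribute to $F$. For a finite state space with bounded intensities this is routine (the probability of two jumps in $[v, v+h]$ is $O(h^2)$), but it must be stated. An alternative that sidesteps this entirely is to set up a renewal-type integral equation for $F$ by conditioning on the first jump, exactly paralleling \eqref{e:ode} in the proof of Lemma~\ref{l:avoidG1}, and then verify by direct differentiation that $\int_0^u \alpha e^{v\bm{\lambda}(d^c)}\bm{\lambda}(d^c,a){\bf 1}\,dv$ solves it with the correct initial condition $F(0)=0$; uniqueness of the solution then gives \eqref{e:dist0}. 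I expect the authors take one of these two routes, with the second being the more self-contained given that the machinery for it is already in place from Lemma~\ref{l:avoidG1}.
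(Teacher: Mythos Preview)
Your proposal is correct and follows essentially the same approach as the paper: both argue that for $\tau_a \in (u,u+h]$ with $X$ avoiding $b$, the process must stay in $d^c$ up to time $u$ (probability $p^u_{\alpha,d}$ from Lemma~\ref{l:avoidG1}) and then jump into $a$, with the multiple-jump contribution being $O(h^2)$. The paper takes your first route---the infinitesimal increment $P_i(\tau_a \in (u,u+h],\,X_v \notin d,\,v\le u) = \big(\sum_{j\in a}\sum_{l\in d^c} p_d^u(i,l)\lambda(l,j)\big)h + o(h)$---rather than the renewal/ODE alternative you also sketch, so your expectation about which route they choose is reversed, but the substance is the same.
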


The idea behind \eqref{e:dist0} and its proof is this:
for $\tau_a = u$ with $X$ staying out of $b$ until time $u$,  $X$ has to stay in the set $d^c$
until time $u$ and jump exactly at that time into $a$.

\begin{proof}[Proof of Theorem \ref{t:densityt1}]
The definition of the exponential distribution implies that that $X$
jumps more than once in during the time interval $[u,u+h]$ 
has probability $O(h^2)$.  This,
\eqref{e:intensities} and the Markov property of $X$ (invoked at time $u$)
give
\begin{equation}\label{e:basic}
P_i( \tau_a \in (u,u+h), X_v \notin d, v \le u ) = \left( \sum_{j \in a} 
\sum_{l \in d^c } p_{d}^u(i,l) ~\lambda(l,j) \right) h + o(h).
\end{equation}
By the previous lemma $p_{d}^u(i,j)$ equals
exactly the $(i,j)^{th}$ component of  $e^{u\bm{\lambda}(d^c)}$. These imply  
\eqref{e:dist0}.
\end{proof}

The ideas in the previous proof also give
\begin{proposition}\label{p:distXtaub}
Let $a,b \subset \Omega_0$,  $a\cap b = \emptyset$, $a$ nonempty  be given.
Define $\tau_a \doteq \{u: X_u \in a \}$ and $d = a\cup b.$
Let $\alpha$ is an initial distribution on $\Omega_0$ with
$\alpha|_d = 0.$
Set
$\alpha_1 \doteq \alpha e^{\tau_a \bm{\lambda}(d^c)} \bm{\lambda}(d^c,a)$
and ${\mathcal V} \doteq \{X_v \notin b,v \le \tau_a \}$. Then
\[
P_\alpha 
( X_{\tau_a} = j | (\tau_a, 1_{\mathcal V} ) ) = 
\alpha_1(j) / \alpha_1 {\bf 1} \text{ on }  {\mathcal V},
\]
where $1_{\mathcal V}$ is the indicator function of the event
${\mathcal V}.$
\end{proposition}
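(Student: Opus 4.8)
The plan is to identify the conditional law of $X_{\tau_a}$ on the event ${\mathcal V}$ by going back to the infinitesimal description used in the proof of Theorem \ref{t:densityt1}. The key observation is that the unnormalized ``density'' appearing there, namely $\alpha e^{u\bm{\lambda}(d^c)}\bm{\lambda}(d^c,a)$, is naturally a \emph{row vector indexed by states in $a$}: its $j^{th}$ component is (the infinitesimal version of) $P_\alpha(\tau_a\in(0,u], X_{\tau_a}=j, X_v\notin b, v\le\tau_a)$. So $\alpha_1 = \alpha e^{\tau_a\bm{\lambda}(d^c)}\bm{\lambda}(d^c,a)$ should be read as: the joint ``density'' of $(\tau_a, X_{\tau_a})$ at $(\tau_a, j)$, restricted to paths staying out of $b$. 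Dividing by $\alpha_1{\bf 1}$ — which by \eqref{e:dist0} is exactly the density of $\tau_a$ restricted to ${\mathcal V}$ — gives the conditional probability that $X_{\tau_a}=j$. Since $\bm{\lambda}(d^c,a)$ has rows (and, after right-multiplication, mass) only on $a$, the vector $\alpha_1$ is supported on $a$, so the claimed formula is well-posed.

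First I would make precise the joint statement behind \eqref{e:basic}: for each fixed $j\in a$, refine \eqref{e:basic} to
\begin{equation*}
P_i(\tau_a\in(u,u+h),\, X_{\tau_a}=j,\, X_v\notin d,\, v\le u) = \Bigl(\sum_{l\in d^c} p_d^u(i,l)\,\lambda(l,j)\Bigr)h + o(h),
\end{equation*}
which is immediate from the same argument (we simply do not sum over $j\in a$), together with Lemma \ref{l:avoidG1} identifying $p_d^u(i,l)$ with the $(i,l)$ entry of $e^{u\bm{\lambda}(d^c)}$. Averaging over $\alpha$ and dividing by $h$, this says the joint law of $(\tau_a, X_{\tau_a})$ restricted to $\{X_v\notin b, v\le\tau_a\}$ has ``density'' (in $u$, with counting measure in $j$) equal to the $j^{th}$ component of $\alpha e^{u\bm{\lambda}(d^c)}\bm{\lambda}(d^c,a)$. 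Note that the event $\{X_v\notin d, v\le\tau_a\}=\{X_v\notin b, v\le\tau_a\}$ up to a null set, since by definition $X_v\notin a$ for $v<\tau_a$ and at $v=\tau_a$ we have $X_{\tau_a}=j\in a$.

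Next I would assemble the conditional law. The event ${\mathcal V}$ is $\sigma(\tau_a, 1_{\mathcal V})$-measurable, and conditionally on $(\tau_a,1_{\mathcal V})$ with $1_{\mathcal V}=1$, the distribution of $X_{\tau_a}$ over $j\in a$ is, by definition of conditional expectation and the disintegration just established,
\begin{equation*}
P_\alpha(X_{\tau_a}=j \mid (\tau_a,1_{\mathcal V})) = \frac{\bigl(\alpha e^{\tau_a\bm{\lambda}(d^c)}\bm{\lambda}(d^c,a)\bigr)(j)}{\sum_{j'\in a}\bigl(\alpha e^{\tau_a\bm{\lambda}(d^c)}\bm{\lambda}(d^c,a)\bigr)(j')} = \frac{\alpha_1(j)}{\alpha_1{\bf 1}} \quad\text{on }{\mathcal V},
\end{equation*}
the denominator being finite and strictly positive on ${\mathcal V}$ precisely because $\alpha_1{\bf 1}$ is the value at $\tau_a$ of the density in \eqref{e:dist0}, which is positive wherever ${\mathcal V}$ has positive conditional mass. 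The main obstacle, and the only place requiring genuine care rather than bookkeeping, is the measure-theoretic justification that the ratio of the two infinitesimal limits really is a version of the conditional probability: one has to argue that for any bounded measurable $g$ on ${\mathbb R}_+$, $E_\alpha[1_{\mathcal V} 1_{\{X_{\tau_a}=j\}} g(\tau_a)] = E_\alpha[1_{\mathcal V}\,\alpha_1(j)/(\alpha_1{\bf 1})\, g(\tau_a)]$, which follows by integrating the refined \eqref{e:basic} against $g$ over $u$ and using the already-established absolute continuity of $\tau_a$ on ${\mathcal V}$ from Theorem \ref{t:densityt1}. Everything else is a direct consequence of the Markov property and Lemma \ref{l:avoidG1}, exactly as in the proof of Theorem \ref{t:densityt1}.
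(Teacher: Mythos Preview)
Your proposal is correct and follows essentially the same route as the paper: refine \eqref{e:basic} by not summing over $j\in a$ to get the joint infinitesimal law of $(\tau_a,X_{\tau_a})$ on ${\mathcal V}$, then divide by the marginal density from \eqref{e:dist0} and invoke the definition of conditional expectation. One small slip: the set $\{X_v\notin d,\ v\le\tau_a\}$ is actually empty on $\{\tau_a<\infty\}$ since $X_{\tau_a}\in a\subset d$, so your ``up to a null set'' remark is off---but this is harmless because your refined display correctly uses $v\le u$ (with $u<\tau_a$), which is all the argument needs.
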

${\mathcal V}$ is the event that $X$ does not visit the set $b$
before time $\tau_a$.
\begin{proof}
The arguments that led to \eqref{e:basic} 
in the proof of Theorem \ref{t:densityt1} also give
\begin{align*}
P_i( X_{\tau_a }= j, \tau_a  \in (u,u+h),~ &X_v \notin b, v \le u ) \\
&~~~= \left( 
\sum_{l \in d^c } p_{d}^u(i,l) ~\lambda(l,j) \right) h + o(h).
\end{align*}
The rest follows from the
definition of the conditional expectation.
\end{proof}

Set $b=\emptyset$ in Theorem \ref{t:densityt1} to get the density
of $\tau_a.$ The formula \eqref{e:dist0} generalizes 
the exponential density: if $\tau'$ is exponentially distributed
with rate $\lambda' \in (0,\infty)$ it has density $e^{\lambda' t} \lambda'$.

\subsection{The multidimensional density}\label{ss:multi}
One can extend \eqref{e:dist0} to a representation of the distribution of 
$\tau$ using
the subpartition
notation of subsection \ref{ss:subpartitions}. 
For a partition $s$ of $K$,
$n \in \{1,2,...,|s|\}$ and
$t\in R_s \subset {\mathbb R}_+^{K}$ 
define
\begin{equation}\label{e:bartWT}
\bar{t}_n \doteq t_{s(n,1)},~~~ \bar{t}_0 \doteq 0, ~~
W_n \doteq [S(L^{n-1}s)]^c,
 T_n \doteq \left[\bigcap_{ k \in s(n) }\Gamma_k\right] \cap W_{n+1},
\end{equation}
where $W$ stands for ``waiting''
and $T$ for ``target.'' The key idea of the density formula and its proof is the $|s|$ step version
of the one in Theorem \ref{t:densityt1}:
in order for $\tau=t \in {\mathbb R}_+^K$, 
$X$ has to stay in the set $W_1$ until time $\bar{t}_1$ and jump exactly at
that time into $T_1 \subset W_2$; then stay in the set $W_2$ until time $\bar{t}_2$ and jump
exactly then into $T_2$ and so on until all of the pairs $(W_n,T_n)$, $n\le |s|$,
are exhausted.

Although not explicitly stated,
all of the definitions so far depend on the collection $\{\Gamma_k, k \in K\}$.
We will
express this dependence explicitly in the following theorem by including
the index set $K$ as a variable of the density function $f$. This will
be useful in its recursive proof, in the next subsection
where we comment on the case when
$\alpha$ is an arbitrary initial distribution
and in Proposition \ref{p:conditional}
where we give the conditional density of $\tau$ given ${\mathscr F}_u$, 
$u > 0$.
For a sequence $M_1, M_2,..., M_n$ of square matrices of the same size
$\prod_{m=1}^n M_m$
will mean
$M_1 M_2 \cdots M_n$.
\begin{theorem}\label{t:densityt1tk}
For any partition $s \in {\mathcal S}$ of $K$,
the distribution of $\tau$ on the set $R_s$ has density
\begin{equation}\label{e:densityt1tk}
f(\alpha, t,K) \doteq \alpha \left( \prod_{n=1}^{|s|} e^{\bm{\lambda}(W_n)(\bar{t}_n -\bar{t}_{n-1})}  
\bm{\lambda}(W_n,T_n) \right) {\bf 1},
\end{equation}
$t \in R_s \subset{\mathbb R}^K_+$, 
with respect to the $|s|$ dimensional Lebesgue measure on $R_s.$
\end{theorem}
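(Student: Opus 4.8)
The plan is to prove Theorem~\ref{t:densityt1tk} by induction on $|s|$, the number of blocks of the partition, reducing the multidimensional statement to $|s|$ successive applications of the one-dimensional argument behind Theorem~\ref{t:densityt1}. The base case $|s|=1$ is essentially Theorem~\ref{t:densityt1} with $b=\emptyset$: if $s=(A)$ with $A=K$, then $W_1 = \Omega_0$ and $T_1 = \cap_{k\in A}\Gamma_k$, and on $R_s = \{t_{k_1}=t_{k_2}, k_1,k_2\in A\}$ the common value $\bar t_1$ is precisely the time $X$ first jumps into $\cap_{k\in A}\Gamma_k$ while avoiding nothing beforehand; formula \eqref{e:dist0} with $a=T_1$, $b=\emptyset$, $d = a$ gives the density $\alpha e^{\bm{\lambda}(W_1)\bar t_1}\bm{\lambda}(W_1,T_1){\bf 1}$, matching \eqref{e:densityt1tk}. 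Actually, since I must also track the avoidance of \emph{future} target sets, I would want to strengthen the base case slightly: the right object is $P_\alpha(\tau_a\in(0,u], X_v\notin b,\ v\le\tau_a)$ as in Theorem~\ref{t:densityt1}, which already permits a ``taboo'' set $b$, so this flexibility is available for the inductive step.

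For the inductive step, fix a partition $s$ with $|s|\ge 2$ and let $A = s(1)$ be its first block. Condition on the pair $(\tau_{T_1}, X_{\tau_{T_1}})$ where $T_1 = [\cap_{k\in A}\Gamma_k]\cap W_2$ --- this is the event that at time $\bar t_1$ the process, having stayed in $W_1 = \Omega_0$ throughout $[0,\bar t_1)$ and in particular having avoided $S(Ls)$, jumps into $T_1$. The first-block analysis: using the Markov property at time $\bar t_1$ together with the ``stay in $W_1$, then jump into $T_1$'' description, the infinitesimal probability that $\tau|_A$ realizes the common value $\bar t_1$ with the conditioning event is, by exactly the $o(h)$ computation in the proof of Theorem~\ref{t:densityt1} (cf.\ \eqref{e:basic} and Lemma~\ref{l:avoidG1}), governed by the operator $e^{\bm{\lambda}(W_1)\bar t_1}\bm{\lambda}(W_1,T_1)$ applied to $\alpha$. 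Crucially $\tau_{T_1} = \bar t_1$ on $R_s$ forces $X$ to avoid all of $S(Ls) = \cup_{n\ge 2}\cup_{k\in s(n)}\Gamma_k$ before $\bar t_1$ (since those sets make up $W_1^c$ relative to the remaining partition's targets), so no mass leaks into the wrong region $R_{s'}$. Then, conditionally on $X_{\bar t_1} = j \in T_1$, the strong Markov property restarts the process at $j$, and the remaining coordinates $\tau|_{K - A}$ must realize the values $Lt - \bar t_1 \in R_{Ls}$ for the shifted collection; by the induction hypothesis applied to the Markov process started at $\delta_j$ with index set $K - A$, that conditional density is $f(\delta_j,\, Lt-\bar t_1,\, K-A)$. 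Integrating over $j$ against the ``first-block'' kernel $\big(\alpha e^{\bm{\lambda}(W_1)\bar t_1}\bm{\lambda}(W_1,T_1)\big)(j)$ and using that $\bm{\lambda}(W_2)$ for the shifted problem is built from $W_2 = [S(Ls)]^c$ exactly as in \eqref{e:bartWT} collapses the product to \eqref{e:densityt1tk}.

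The step I expect to be the main obstacle is making the ``no mass in the wrong $R_{s'}$'' bookkeeping airtight: I need to verify that conditioning on $\{\tau_{T_1} = \bar t_1\}$ together with $X$ staying in $W_1 = [S(s)]^c \cup (\text{states not yet relevant})$ --- here $W_1 = \Omega_0$ because $S(L^0 s)$'s complement is all of $\Omega_0$ minus... wait, more precisely $W_1 = [S(s)]^c$'s role --- is captured correctly, and that the events "first hit of $\cap_{k\in A}\Gamma_k\cap W_2$ occurs before any hit of the strictly-later target sets" decompose the sample space \emph{exactly} into the pieces indexed by partitions, with ties (equalities $t_{k_1}=t_{k_2}$ within a block) carrying the full mass of simultaneous entry. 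This requires the observation that for a finite-state Markov process two distinct states are hit at distinct times almost surely unless they are entered by the \emph{same} jump, which is what legitimizes decomposing ${\mathbb R}_+^K$ into the $R_s$ and assigning each its own density rather than, say, worrying about a set $\{t_{k_1} = t_{k_2}\}$ receiving mass from a path that enters $\Gamma_{k_1}$ and $\Gamma_{k_2}$ at genuinely different but coincidentally-equal... which cannot happen. I would also need to check a measure-theoretic point: that the conditional law of the remaining hitting times given $(\bar t_1, X_{\bar t_1})$ really is the unconditional law of the restarted problem, which is the strong Markov property at the stopping time $\tau_{T_1}$ --- standard, but worth stating. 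Everything else --- the $o(h)$ estimates, the exponential-of-submatrix identification --- is a verbatim replay of subsection~\ref{ss:tau1}.
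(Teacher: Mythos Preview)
Your overall strategy matches the paper's: induct, peel off the first block via Theorem~\ref{t:densityt1} and Proposition~\ref{p:distXtaub}, restart via the strong Markov property, and apply the induction hypothesis to $K-s(1)$ with partition $Ls$. The paper inducts on $|K|$ rather than on $|s|$, which makes its base case trivial ($|K|=1$ forces $s=(\{1\})$ and then $b=\emptyset$ in \eqref{e:dist0} is legitimate), but either induction variable is workable.

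There is, however, a genuine error running through your proposal: you repeatedly take $W_1=\Omega_0$. By \eqref{e:bartWT}, $W_1=[S(L^{0}s)]^c=[S(s)]^c=\big(\bigcup_{k\in K}\Gamma_k\big)^c$, which is \emph{not} all of $\Omega_0$. This matters in two places. First, your base case $|s|=1$ with $b=\emptyset$ is wrong: when $s=(K)$ and $|K|>1$, the event $\{\tau\in R_s\}=\{\tau_{k_1}=\tau_{k_2}\text{ for all }k_1,k_2\in K\}$ requires $X$ to hit $a=\cap_{k}\Gamma_k$ \emph{before} hitting $b=\cup_{k}\Gamma_k\setminus\cap_{k}\Gamma_k$, so in Theorem~\ref{t:densityt1} you must take $d=a\cup b=\cup_{k}\Gamma_k$, giving $d^c=W_1$ and the correct exponent $\bm{\lambda}(W_1)$. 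Your phrase ``avoiding nothing beforehand'' is precisely the mistake. Second, in the inductive step the ``stay in $W_1$'' taboo is what prevents any $\tau_k$ with $k\notin s(1)$ from occurring before $\bar t_1$; if $W_1$ were $\Omega_0$ there would be no taboo and the decomposition $\{\tau\in R_s\}={\mathcal W}_1\cap\{\hat\tau\in R_{Ls}\}$ (the paper's \eqref{e:Rsrec}) would fail. You half-notice this in your last paragraph but do not resolve it. Once $W_1$ is corrected to $\big(\cup_{k\in K}\Gamma_k\big)^c$ throughout and the base case uses $b=\cup_k\Gamma_k\setminus\cap_k\Gamma_k$, your argument goes through and coincides with the paper's.
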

In the proof we will use
\begin{lemma}\label{l:computecondexp}
Let ${\mathscr S}_1$ and ${\mathscr S}_2$ 
be two measurable spaces and
$g: {\mathscr S}_1 \times {\mathscr S}_2 \rightarrow {\mathbb R}$ a 
bounded measurable function. 
Let $Y_i$ 
be an ${\mathscr S}_i$ valued random variable
on a probability space $(\bar{\Omega}, \bar{\mathscr F}, \bar{\mathbb P})$. 
Let ${\mathscr G}$
be a sub $\sigma$-algebra of 
$\bar{\mathscr F}$ and suppose 1)  $Y_1$ is ${\mathscr G}$
measurable and 2)  under $\bar{\mathbb P}$, $Y_2$ 
has a regular conditional distribution given ${\mathscr G}.$ 
For $y_1 \in {\mathscr S}_1$ 
define $h(y_1) \doteq \bar{\mathbb E}[ g(y_1, Y_2) | {\mathscr G}].$
Then
\[
\bar{\mathbb E}[g(Y_1,Y_2) |{\mathscr G}] = h(Y_1).
\]
\end{lemma}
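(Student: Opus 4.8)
The statement is the standard "freezing lemma" (also called the "substitution property" of conditional expectation): when computing $\bar{\mathbb E}[g(Y_1,Y_2)\mid{\mathscr G}]$ with $Y_1$ being ${\mathscr G}$-measurable, one may treat $Y_1$ as a constant, compute the conditional expectation in the remaining variable, and then substitute $Y_1$ back in. The plan is to prove it first for product-form integrands $g(y_1,y_2)=u(y_1)v(y_2)$ with $u,v$ bounded measurable, and then extend to general bounded measurable $g$ by a monotone-class / functional-form argument.

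First I would treat the product case. If $g(y_1,y_2)=u(y_1)v(y_2)$, then $h(y_1)=u(y_1)\,\bar{\mathbb E}[v(Y_2)\mid{\mathscr G}]$, so $h(Y_1)=u(Y_1)\,\bar{\mathbb E}[v(Y_2)\mid{\mathscr G}]$. Since $Y_1$ is ${\mathscr G}$-measurable and $u$ is bounded measurable, $u(Y_1)$ is a bounded ${\mathscr G}$-measurable random variable, so it pulls out of the conditional expectation:
\[
\bar{\mathbb E}[g(Y_1,Y_2)\mid{\mathscr G}]
=\bar{\mathbb E}[u(Y_1)v(Y_2)\mid{\mathscr G}]
=u(Y_1)\,\bar{\mathbb E}[v(Y_2)\mid{\mathscr G}]
=h(Y_1).
\]
This step uses only the elementary "take out what is known" property of conditional expectation and does not even require the regular conditional distribution hypothesis; that hypothesis enters only to guarantee that $h$ is a well-defined measurable function of $y_1$ in the general case. (Strictly, one also wants $h$ measurable here; for products it is visibly measurable.)

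Next I would pass to general bounded measurable $g$. Let ${\mathcal H}$ be the class of bounded measurable $g:{\mathscr S}_1\times{\mathscr S}_2\to{\mathbb R}$ for which the identity holds; equivalently, writing $K(y_1,dy_2)$ for the regular conditional distribution of $Y_2$ given ${\mathscr G}$ (a kernel from $({\bar\Omega},{\mathscr G})$ to ${\mathscr S}_2$, which by hypothesis exists), one has $h(y_1)=\int g(y_1,y_2)\,K(\cdot,dy_2)$ well-defined and ${\mathscr G}\otimes$-measurable in $y_1$, and $\bar{\mathbb E}[g(Y_1,Y_2)\mid{\mathscr G}]=h(Y_1)$ $\bar{\mathbb P}$-a.s. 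The class ${\mathcal H}$ contains the products $u\otimes v$ by the previous paragraph, is clearly a vector space, contains constants, and is closed under bounded monotone limits (by monotone/dominated convergence for both ordinary and conditional expectations, together with the corresponding convergence for the kernel integrals defining $h$). The products $u\otimes v$ form a multiplicative class generating the product $\sigma$-algebra on ${\mathscr S}_1\times{\mathscr S}_2$, so the functional monotone class theorem gives ${\mathcal H}\supseteq$ all bounded measurable functions, which is the claim.

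The only mild subtlety — and the single point that needs the regular-conditional-distribution hypothesis — is measurability of $y_1\mapsto h(y_1)$ in the general case: one must know $h(y_1)=\int g(y_1,y_2)K(\cdot,dy_2)$ is jointly measurable as a function on ${\bar\Omega}\times{\mathscr S}_1$ (so that $h(Y_1)$ makes sense and the monotone class argument stays within a class of genuine random variables). This follows from the measurability of kernels applied to product-measurable integrands, again first for products and then by monotone class. I do not expect any real obstacle here; it is a routine measure-theoretic bookkeeping, and the essential probabilistic content is entirely in the one-line product computation above.
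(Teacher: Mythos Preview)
Your argument is correct and is the standard monotone-class proof of the freezing/substitution lemma. The paper does not actually supply a proof: it simply states that the lemma ``follows from the definition of regular conditional distributions'' and refers the reader to Durrett's textbook, so your write-up is in fact more detailed than what the paper provides while being fully consistent with that reference.
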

The value $h(y_1)$ in the previous lemma is defined via a conditional expectation and 
therefore it depends on $\bar{\omega} \in \bar{\Omega}$.
The proof of Lemma \ref{l:computecondexp} 
follows from the definition of regular conditional distributions, see, for example,
\cite[Section 5.1.3, page 197]{durrett2010probability}. 
To invoke Lemma \ref{l:computecondexp} we need the
existence of the regular conditional distribution of $Y_2$;
$Y_2$ in the proof below will take values in
a finite dimensional Euclidean space (a complete separable metric
space) and therefore will have a regular conditional distribution, 
for further details
we refer the reader to, e.g., \cite[Theorem 2.1.15]{durrett2010probability} and 
\cite[Theorem 5.1.9]{durrett2010probability}.
\begin{proof}
The proof will use induction on $|K|.$
For $|K|=1$ \eqref{e:densityt1tk} (with $b= \emptyset$)
and \eqref{e:dist0} are the same. Suppose
that \eqref{e:densityt1tk} holds for all $K$ with $|K| \le \kappa -1$;
we will now argue that then it must also hold for $|K|=\kappa.$ 
Fix a partition $s$ of $K$. We would like to show
that $\tau$ restricted to $R_s$ has the density \eqref{e:densityt1tk}.

For any continuous 
$g:{\mathbb R}^{K} \rightarrow {\mathbb R}$ with compact
support, we would like to show
\begin{equation}\label{e:toshowdensitythm}
{\mathbb E}[ 1_{R_s}(\tau) g(\tau) ] = \int_{R_s} g(t) f(\alpha, t,k) d_s t,
\end{equation}
where $d_s t$ denotes the $|s|$ dimensional Lebesgue measure on $R_s$.
Define 
$\tau' \doteq \bigwedge_{k \in K} \tau_k$; $\tau'$ is the first time
$X$ enters $\cup_{k\in K} \Gamma_k.$
In the rest of the proof we will proceed as if $ P_\alpha(\tau' < \infty)=1$;
the treatment of the possibility $P_\alpha(\tau' = \infty)>0$ needs no
new ideas and the following argument can
be extended to handle it by adding several case by case comments.

If $\tau \in R_s$ holds then
1)
$X_{\tau'} \in T_1$ and 2) $X_t \in W_1$ for $ t \le \tau'$;
1) and 2) also imply $\tau' = \tau_{s(1,1)}.$
Therefore,
\begin{equation}\label{e:RssubsetW1}
\{ \tau \in R_s \} \subset 
{\mathcal W}_1 \doteq 
\{ X_u \in W_1, u \le \tau'\} \cap \{ X_{\tau'} \in
T_1 \}.
\end{equation}
Theorem \ref{t:densityt1} implies that $\bm{\lambda}(W_1,T_1)$ is non zero
if and only if ${\mathcal W}_1$ has nonzero probability. Thus
if
$\bm{\lambda}(W_1,T_1)$ is zero then
$P_\alpha(\tau \in R_s) =0$
and indeed $f(\alpha,t,K)=0$ is the density of $\tau$ over $R_s.$
From here on we will treat the case
when $\bm{\lambda}(W_1,T_1)$ is nonzero.

Define $\hat{X}_u \doteq X_{u + \tau'}$ and for $k \in S(Ls)$
\[
\hat{\tau}_k \doteq \inf \{u : \hat{X}_u \in \Gamma_k \};
\]
one obtains $\hat{X}$ from $X$ by shifting time for the latter
left by $\tau'$, i.e., once time hits $\tau'$ reset it to $0$
and call the future path of the process $\hat{X}$.
The Markov property of $X$ implies that
$\hat{X}$ is a Markov process with intensity matrix $\lambda$
and initial point $\hat{X}_0= X_{\tau'}$.
The relation
\eqref{e:RssubsetW1} implies 
\begin{equation}\label{e:rectau0}
\hat{\tau} = \tau|_{Ls} - \tau'
\end{equation}
on the set $\{\tau \in R_s\}.$ Finally, the last display, 
the definition of $\tau$ and that of ${\mathcal W}_1$ imply
\begin{equation}\label{e:Rsrec}
\{ \tau \in R_s \} = {\mathcal W}_1 \cap \{ \hat{\tau} \in R_{Ls} \}.
\end{equation}
In words this display says: for $\tau$ to be partitioned
according to $s$,  among all $\{\Gamma_k\}$, $X$ must visit $\cap_{k\in s(1)} \Gamma_k$ first
and after this visit the rest of the hitting times
must be partitioned according to $Ls$.

Denote by ${\bf 1}'$ the function that maps all elements of $K$ to $1$.
Define $\hat{g}: {\mathbb R} \times {\mathbb R}^{S(Ls)} \rightarrow {\mathbb R}$
as
\[
\hat{g}(t', \hat{t}) \doteq g\left(
t' {\bf 1}' + \hat{t} |^{S(s)} \right),
\]
where
we use the function restriction/ extension notation
 of \eqref{e:restriction}  and \eqref{e:extension}.
Displays
\eqref{e:rectau0} and \eqref{e:Rsrec} imply
\begin{align}
{\mathbb E}[ 1_{R_s}(\tau) g(\tau) ] &= \notag
{\mathbb E}[ 1_{{\mathcal W}_1} 1_{R_{Ls}}(\hat{\tau})\hat{g}(\tau', \hat{\tau} ) ].
\intertext{Condition the last expectation on ${\mathscr F}_{\tau'}$:}
&=
\notag
{\mathbb E}[ 
{\mathbb E}[1_{{\mathcal W}_1} 1_{R_{Ls}}(\hat{\tau})\hat{g}(\tau', \hat{\tau} )| 
{\mathscr F}_{\tau'}] ].
\intertext{ ${\mathcal W}_1$ is ${\mathscr F}_{\tau'}$ measurable and gets out
of the inner expectation}
&=
\label{e:densitythmarg1}
{\mathbb E}[ 
1_{{\mathcal W}_1} 
{\mathbb E}[
1_{R_{Ls}}(\hat{\tau})g(\tau', \hat{\tau} )| 
{\mathscr F}_{\tau'}] ].
\end{align}

For $t' \in {\mathbb R}_+$ define
\begin{equation}\label{e:defh}
h(t') \doteq
 {\mathbb E}[ 1_{R_{Ls}}(\hat{\tau}) \hat{g}(t', \hat{\tau} )
| {\mathscr F}_{\tau'} ] =
 {\mathbb E}[ 1_{R_{Ls}}(\hat{\tau}) \hat{g}(t', \hat{\tau} )
| \hat{X}_0 ],
\end{equation}
the last equality is by the strong Markov property of $X$.
Once again, $h(t')$ is 
a conditional expectation and thus it depends on $\omega$.
Lemma \ref{l:computecondexp} implies that the conditional expectation
in \eqref{e:densitythmarg1} equals
$h(\tau')$ ($\hat{\tau}$ is substituted for the $Y_2$ of the lemma).
The random variable $\hat{X}(0)$ takes values in a finite set
and therefore one can compute the
last conditional expectation by conditioning
on each of these values separately. This, that $\hat{X}$ is a Markov
process with intensity matrix $\lambda$ and
the induction assumption imply that on 
$\hat{X}_0 = j$
\begin{equation}\label{e:innercondexp}
h(t')= {\mathbb E}[ 1_{R_{Ls}}(\hat{\tau}) \hat{g}(t', \hat{\tau} )
| \hat{X}_0 =j ]
= \int_{R_{Ls}} f(\delta_j, t, K-s(1) ) g(t', \hat{t}) d_{Ls} t.
\end{equation}
Once we substitute \eqref{e:innercondexp} for the conditional expectation
in \eqref{e:densitythmarg1} we get an expectation involving only three
random variables: $\tau'$, $1_{{\mathcal W}_1}$ 
and $\hat{X}_0= X_{\tau'}.$  Theorem \ref{t:densityt1} implies that
the density of $\tau'$ on the set ${\mathcal W}_1$
is
$\alpha e^{\bm{\lambda}(W_1) \bar{t}_1} \bm{\lambda}(W_1,T_1) {\bf 1}$
and Proposition \ref{p:distXtaub} implies that the 
distribution of $\hat{X}(0)$ conditioned on $\tau' =\hat{t}_1$ and 
$1_{{\mathcal W}_1} = 1$ 
is
\[
\frac{\alpha e^{\bm{\lambda}(W_1) \hat{t}_1 } \bm{\lambda}(W_1,T_1)}{\alpha
e^{\bm{\lambda}(W_1) \hat{t}_1} \bm{\lambda}(W_1,T_1) {\bf 1}.
}
\]
These, the induction hypothesis, \eqref{e:defh} and \eqref{e:innercondexp}
imply that the outer expectation \eqref{e:densitythmarg1}
equals \eqref{e:toshowdensitythm}.
This last assertion finishes the proof 
of the induction step and hence the theorem.
\end{proof}
In what follows, to ease exposition,
we will sometimes refer to $f$ as the ``density'' of $\tau$
without explicitly mentioning the reference measures $d_s$, $s \in {\mathcal S}$.

\begin{remark}{\em
If any of the matrices in the product \eqref{e:densityt1tk}
equals the zero matrix  then $f$ will be $0$. 
Therefore,
if $\bm{\lambda}(W_n,T_n)=0$ for some $n =1,2,...,|s|$ then
$P_\alpha( \tau \in R_s) = 0.$
By definition  $\bm{\lambda}(W,T)=0$
if $T = \emptyset.$
Thus as a special case we have
$P_\alpha( \tau \in R_s) = 0$ if
$T_n =\emptyset$ for some $n = 1,2,3,...,|s|$.
}
\end{remark}

\begin{remark}{\em
The first $\kappa > 0 $ jump times
of a standard Poisson process with rate $\lambda' \in (0,\infty)$
have the joint density
\[
\prod_{n=1}^{\kappa} e^{\lambda' (t_{n} - t_{n-1})} \lambda',
\]
$0=t_0 < t_1 < t_2 < \cdots < t_\kappa$.
The density
\eqref{e:densityt1tk} is a generalization of this simple formula.
}
\end{remark}

\subsection{When $\alpha$ puts positive mass on $\cup_{k \in K} \Gamma_k$}
\label{ss:positivemasongamma}
If $\alpha$ puts positive
mass on $\gamma \doteq \cup_{k \in K} \Gamma_k$
one best describes the distribution of $\tau$ piecewise as follows.
Define
$\bar{\alpha}' \doteq  1- \sum_{ i \in \gamma } \alpha(i)$
and 
$\alpha' \doteq (\alpha  - \sum_{i \in \gamma } \alpha(i) \delta_i)/
\bar{\alpha}'$
if $\bar{\alpha}' > 0$; $\bar{\alpha}'$ is a real number and $\alpha'$,
when defined, is a distribution.
First consider the case when $\bar{\alpha}' > 0$. The foregoing
definitions imply
\begin{equation}\label{e:inpieces}
P_\alpha( \tau \in U) = \bar{\alpha}' P_{\alpha'}( \tau \in U) + 
\sum_{ i \in \gamma } \alpha(i) P_i ( \tau \in U)
\end{equation}
for any measurable
$U \subset {\mathbb R}_+^K$. By its definition $\alpha'$ puts no mass
on $\gamma = \cup_{k \in K } \Gamma_k$ and therefore 
Theorem \ref{t:densityt1tk}
is applicable and 
$f(\alpha', \cdot , K)$ is the density of the distribution
$P_{\alpha'}(\tau \in \cdot)$.
For the second summand of \eqref{e:inpieces}, it is
enough to compute each $P_i(\tau \in U )$ separately.
Define
$K_i \doteq \{k:  i \in \Gamma_k \}$,
$U_i \doteq \{t: t\in U, t_k = 0, k \in K_i\}$,
$\bar{U}_i \doteq \{ t|_{K_i^c}, t \in U_i \}.$
Now remember that $i \in \gamma$; thus 
if $i \in \Gamma_k$ then $\tau_k = 0$ under $P_i$, and
therefore
$P_i( \tau \in U) = P_i ( \tau \in U_i).$
For $\tau \in U_i$, the stopping times $\tau|_{K_i}$ are all deterministically
$0$. Thus to compute $P_i( \tau \in U_i)$ it suffices to compute
$P_i ( \tau|_{K_i^c}  \in \bar{U}_i).$ But by definition
$i \notin \cup_{ k \in K_i^c} \Gamma_{k}$ and once again Theorem
\ref{t:densityt1tk} is applicable and gives the density of 
$\tau|_{K_i^c}$ under $P_i$ as $f(\delta_i, \cdot, K_i^c).$
If $\bar{\alpha}' = 0$ then 
\[
P_\alpha( \tau \in U) = 
\sum_{ i \in \gamma } \alpha(i) P_i ( \tau \in U)
\]
and the computation of $P_i(\tau \in U)$ goes as above.

\subsection{Tail probabilities of $\tau$}\label{ss:tailprob}
By tail probabilities we mean probabilities of sets of the form
{\small
\begin{equation}\label{e:tailevents}
\bigcap_{n=1}^{|s|} 
\bigcap_{k_1,k_2 \in s(n) } \{ \tau_{k_1} = \tau_{k_2} \} \cap 
\left \{ \tau_{s(n,1)} > t_{n}
\right\} \bigcap_{ n_1 \neq n_2 , n_1,n_2 \le |s| } \{ \tau_{s(n_1,1)} \neq \tau_{s(n_2,1)} \},
\end{equation}
}
where  $s$ is a partition of $K$  and $t \in {\mathbb R}_+^{|s|}$ such that
$t_n < t_{n+1}$, $n = 1,2,3,...,|s|-1.$
Thus this definition of tail events require
that every equality and inequality condition be explicitly specified.
One can write standard tail events in terms of these, 
e.g.,
$\{ \tau_1 > t_1 \} \cap \{\tau_2 > t_2\}$
is the same as the disjoint union
\[\left( \{\tau_1 > t_1 ,\tau_2 > t_2 \} \cap \{\tau_1 \neq \tau_2\}
\right )
\cup \{\tau_1 = \tau_2 > \max(t_1,t_2)\}.
\] Both of these sets are of the form
\eqref{e:tailevents}.
Thus, it is enough to be able to compute
probabilities of the form \eqref{e:tailevents}.
From here on, to keep the notation short, we will assume
that, over tail events, unless explicitly stated with an equality condition, 
all stopping times appearing in them
are strictly unequal to each other (therefore, when writing formulas,
we will omit the last intersection in \eqref{e:tailevents}).

A tail event of the form \eqref{e:tailevents} 
consists of a sequence of constraints of the form
\[
\{ \tau_{s(n,1)}  = \tau_{s(n,2)}= \cdots = \tau_{s(n,|s(n)|)} > t_n \}.
\]
There are two types of subconstraints involved here: that entrance to 
all $\Gamma_{k} $, $k\in s(n)$, happen at the same time and that 
this event occurs after time $t_n$.
Keeping track of all of these constraints as they evolve in time 
requires more notation, which we now introduce.

Take two disjoint subpartitions $s_1$ and $s_2$ of ${K}$ and
an element $t \in {\mathbb R}_+^{|s_1|}$ such that $t_{|s_1|} > t_{|s_1|-1}
> \cdots > t_2 > t_1$; if $|s_1|=0$ by convention set $t=0.$
Generalize the class of tail events to
\begin{align}\label{e:extendedtail}
{\cal T}(s_1,s_2,t) &\doteq
 \Omega \cap
\left(
\bigcap_{n=1}^{|s_1|} ~~\bigcap_{k_1, k_2 \in s_1(n) } \{ \tau_{k_1} = \tau_{k_2} \} 
\cap \left \{ \tau_{s_1(n,1)} > t_{n}  \right\}\right)\cap\notag \\
&~~~~~~~\bigcap_{n=1}^{|s_2|} ~~\bigcap_{k_1,k_2 \in s_2(n) } \{ \tau_{k_1} = \tau_{k_2} \}.
\end{align}
Setting $s_1 = s$ and $s_2 =\emptyset$ reduces \eqref{e:extendedtail} to \eqref{e:tailevents}.
The indices in $s_1$ appear both in
equality constraints  and time constraints
while indices in $s_2$ appear only in equality constraints.
\begin{remark}\label{r:singleelementcomp}
{\em
The definition \eqref{e:extendedtail} implies that
if a component of $s_2$ has only a single element, that component has no influence on ${\cal T}(s_1,s_2,t)$.
For example, ${\cal T}(s_1,(\{1\},\{2,3\}),t)$ is the same as ${\cal T}(s_1,(\{2,3\}), t).$
}
\end{remark}
To express $P_\alpha({\cal T}(s_1,s_2,t))$ we 
will define a collection of functions ${\bf p}_i$,
$i\in \Omega_0$,
of $s_1$, $s_2$ and $t$.
Let ${\bf p}$ be the collection $\{ {\bf p}_i, i \in \Omega_0\}$ written
as a column matrix.
For $s_1 = \emptyset$,
and $i \in \Omega_0$ define ${\bf p}_i$ as
\[
{\bf p}_i(\emptyset, s_2, 0)\doteq  P_i( {\mathcal T}(\emptyset ,s_2,0)).
\]
The definitions of ${\bf p}$
 and ${\mathcal T}$ and Remark \ref{r:singleelementcomp}
imply 
\begin{equation}\label{e:trivials2}
{\bf p}(\emptyset, s_2 , 0) ={\bf 1}
\end{equation} if $s_2$ is empty or it consists of components with single elements.
For a given disjoint pair of subpartitions $s_1$, $s_2$ 
define
\[
T_n(s_1,s_2) \doteq 
\bigcap_{k \in s_2(n)} \Gamma_k - S(s_1 \cup s_2 - s_2(n)),~~
T(s_1,s_2) \doteq \bigcup_{n=1}^{|s_2|} T_n(s_1,s_2).
\]
If $s_1 \neq \emptyset$ define
\begin{align}\label{e:recfortailprobs}
&{\bf p}(s_1,s_2,t) \doteq \\
&~~
\int_0^{t_1} e^{u\bm{\lambda}(W)} \bm{\lambda}(W, T(s_1,s_2))
\left( \sum_{n=1}^{|s_2|} I_{T_n(s_1,s_2)}~~ {\bf p}(s_1, s_2 - s_2(n), t-u)\right) du
\notag \\
&~~~+e^{t_1 \bm{\lambda}(W)} {\bf p}\left( Ls_1 , s_2 + s_1(1), Lt - t(1) 
\right),\notag
\end{align}
where $W = [S(s_1 \cup s_2)]^c.$
If $s_1\neq \emptyset $ and $s_2 = \emptyset$
\eqref{e:recfortailprobs} reduces to
\begin{equation}\label{e:tailsc}
{\bf p}(s_1,\emptyset,t) = e^{\bm{\lambda}(S(s_1)^c )t_1} {\bf p}(L s_1,
( s_1(1) ), Lt - t_1 ).
\end{equation}
We have the following representation of tail probabilities:
\begin{theorem}\label{t:tailprobs}
Suppose $\Omega_0 - S(s_1 \cup s_2)$ is not empty and that
$\alpha$ is an initial distribution on $\Omega_0$ that puts all of its
mass on this set. Then
\[
P_\alpha({\mathcal T}(s_1,s_2, t) ) = \alpha {\bf p}(s_1,s_2,t).
\]
\end{theorem}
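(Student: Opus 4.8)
The plan is to prove Theorem \ref{t:tailprobs} by induction on the pair $(|s_1|, |s_2|)$ ordered so that the recursion in \eqref{e:recfortailprobs} always refers to strictly smaller instances; concretely, I would induct on $|s_1| + \sum_{n=1}^{|s_2|} |s_2(n)|$, since moving $s_1(1)$ into $s_2$ decreases $|s_1|$ by one while the final term of \eqref{e:recfortailprobs} leaves the total size unchanged in $s_2$ — so I would actually order lexicographically by $(|s_1|, |s_2|\text{-weight})$, treating the ``absorb $s_1(1)$ into $s_2$'' step as the base-level move within a fixed total and the integral term as the genuine size reduction. The base case is $s_1 = \emptyset$: here \eqref{e:trivials2} handles $s_2$ empty or all-singletons, and for nontrivial $s_2$ one notes $P_\alpha(\mathcal T(\emptyset, s_2, 0)) = \alpha \mathbf p(\emptyset, s_2, 0) = \alpha \mathbf p$ is exactly the definition of $\mathbf p_i(\emptyset, s_2, 0)$, so there is nothing to prove — the content of the theorem is entirely in the induction step for $s_1 \neq \emptyset$.

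For the induction step with $s_1 \neq \emptyset$, I would fix $t$ with $t_1 < t_2 < \cdots < t_{|s_1|}$ and decompose the event $\mathcal T(s_1,s_2,t)$ according to what $X$ does up to time $t_1$. There are two mutually exclusive possibilities: (a) $X$ first enters $S(s_1 \cup s_2)$ at some time $u \in (0, t_1]$, or (b) $X$ stays in $W = [S(s_1\cup s_2)]^c$ throughout $[0, t_1]$. In case (b), the constraint $\{\tau_{s_1(1,1)} > t_1\}$ is automatically met, and by the Markov property at time $t_1$ the conditional law is that of a fresh chain started from $X_{t_1} \in W$ with the remaining tail event being $\mathcal T(Ls_1, s_2 + s_1(1), Lt - t_1)$ — because the time constraint on $s_1(1)$ is now spent but its equality constraints persist, exactly as recorded by moving $s_1(1)$ from $s_1$ into $s_2$. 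Lemma \ref{l:avoidG1} (with $d = S(s_1\cup s_2)$) gives that the sub-probability of (b), as a function of the terminal state, is $\alpha e^{t_1 \bm\lambda(W)}$, and combined with the induction hypothesis applied to $\mathbf p(Ls_1, s_2 + s_1(1), Lt - t_1)$ this reproduces the second line of \eqref{e:recfortailprobs}. In case (a), the first entrance into $S(s_1\cup s_2)$ must land in some $\bigcap_{k \in s_2(n)} \Gamma_k$ but \emph{not} in any other $\Gamma_k$ with $k \in S(s_1\cup s_2 - s_2(n))$ — that is, in $T_n(s_1,s_2)$ — because if $X$'s first visit to $S(s_1\cup s_2)$ were in some $\Gamma_k$ with $k \in s_1(m)$ then $\tau_{s_1(m,1)} = u \le t_1 < t_m$ would violate the time constraint (recall $t_1 \le t_m$), and if it hit two blocks $T_n, T_{n'}$ simultaneously the equality-across-blocks exclusions kill it; landing in $T_n(s_1,s_2)$ makes $\tau_k = u$ for all $k \in s_2(n)$ and resolves those equality constraints, leaving $\mathcal T(s_1, s_2 - s_2(n), t - u)$ for the shifted chain $\hat X_\cdot = X_{\cdot + u}$.

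To make case (a) rigorous I would use Theorem \ref{t:densityt1} / Proposition \ref{p:distXtaub} in the same way the proof of Theorem \ref{t:densityt1tk} does: the sub-density of the first entrance time $u$ into $d = S(s_1\cup s_2)$ together with the landing state being in $T(s_1,s_2)$ is $\alpha e^{u\bm\lambda(W)} \bm\lambda(W, T(s_1,s_2))\mathbf 1$, and conditionally on $\{$first entry at time $u$, landing in $T(s_1,s_2)\}$ the landing distribution is the corresponding normalized vector; the indicator $I_{T_n(s_1,s_2)}$ picks out which block was hit, and a strong-Markov argument plus Lemma \ref{l:computecondexp} (with $Y_1 = u$, $Y_2 = \hat\tau$, $\mathscr G = \mathscr F_u$) lets one substitute the induction hypothesis $P_{\hat X_0}(\mathcal T(s_1, s_2 - s_2(n), t-u)) = \mathbf p_{\hat X_0}(s_1, s_2 - s_2(n), t-u)$. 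Summing over $n$ and integrating $u$ over $(0, t_1]$ gives precisely the integral term of \eqref{e:recfortailprobs}; adding the two cases yields $P_\alpha(\mathcal T(s_1,s_2,t)) = \alpha\,\mathbf p(s_1,s_2,t)$. The main obstacle I anticipate is the bookkeeping in case (a): carefully justifying that the first visit to $S(s_1\cup s_2)$ cannot lie in an $s_1$-block (using $t_1 \le t_n$ for all $n$), that simultaneous multi-block hits occur with probability zero or are excluded by the across-block inequalities so the $\sum_n I_{T_n}$ decomposition is exact, and that the residual event really is $\mathcal T(s_1, s_2 - s_2(n), t-u)$ with the time vector merely shifted — none of this is deep, but it requires the same patient case analysis the authors flag as ``several case by case comments,'' and as in Theorem \ref{t:densityt1tk} one should also add a remark that the possibility $P_\alpha(\tau' = \infty) > 0$ is handled by the obvious modifications.
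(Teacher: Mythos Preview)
Your proposal is correct and follows essentially the same approach the paper intends: the authors omit the proof, stating only that it ``is parallel to the proof of Theorem \ref{t:densityt1tk} and involves the same ideas,'' and your decomposition into cases (a) and (b) according to whether $X$ first enters $S(s_1\cup s_2)$ before or after time $t_1$, together with the strong Markov property and the induction hypothesis, is exactly that parallel argument. The induction ordering you describe is slightly over-elaborate---lexicographic on $(|s_1|,|s_2|)$ suffices, since the integral term keeps $|s_1|$ fixed while decreasing $|s_2|$ and the final term decreases $|s_1|$---but the substance is right.
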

The proof is parallel to the proof of Theorem \ref{t:densityt1tk} 
and involves the same ideas and is omitted.

One can write
\eqref{e:densityt1tk} recursively, similar to \eqref{e:recfortailprobs}. The reverse
is not true:
equality constraints, when present, preclude a simple explicit formula for ${\bf p}$ similar to \eqref{e:densityt1tk},
but see subsection \ref{ss:secreptail} for a slightly more explicit representation of ${\bf p}$.

When $s_1$ has no equality constraints and $s_2 =\emptyset$, one can invoke \eqref{e:tailsc} $|s_1|$
times along with Remark \ref{r:singleelementcomp} and \eqref{e:trivials2} and get
\begin{corollary} Let $\alpha$ be as in Theorem \ref{t:tailprobs}.
If $|s_1| > 0$ equals the dimension of $t$ then
\begin{equation}\label{e:tailwhens1simple}
\alpha {\bf p}(s_1,\emptyset,t) = \alpha \left( \prod_{n=1}^{|s|} e^{\bm{\lambda}(W_n)(t_n-t_{n-1})}\right) {\bf 1} 
\end{equation}
where $W_n = [S(L^{n-1}(s_1))]^c$.
\end{corollary}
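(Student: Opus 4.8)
The plan is to derive \eqref{e:tailwhens1simple} purely as a consequence of the recursion \eqref{e:tailsc} together with Theorem \ref{t:tailprobs}, with no need to revisit the underlying path arguments. First I would observe that the hypotheses of the corollary ("$s_1$ has no equality constraints" and "$s_2=\emptyset$") say precisely that every component $s_1(n)$ is a singleton, and that the dimension of $t$ equals $|s_1|$, so the strict chain $t_1<t_2<\cdots<t_{|s_1|}$ makes \eqref{e:tailsc} applicable at every stage. Since $\alpha$ puts all its mass on $\Omega_0-S(s_1)$, Theorem \ref{t:tailprobs} gives $P_\alpha(\mathcal T(s_1,\emptyset,t))=\alpha\,{\bf p}(s_1,\emptyset,t)$, so it suffices to identify ${\bf p}(s_1,\emptyset,t)$.

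The core of the proof is an induction on $|s_1|$ in which the induction is run on the iterate of \eqref{e:tailsc}. Set $m=|s_1|$ and $W_n=[S(L^{n-1}s_1)]^c$ as in the statement, with the convention $t_0=0$. Applying \eqref{e:tailsc} once peels off the first factor: ${\bf p}(s_1,\emptyset,t)=e^{\bm\lambda(W_1)t_1}\,{\bf p}(Ls_1,(s_1(1)),Lt-t_1)$. The subtlety is that after one step $s_2$ is no longer empty but equals $(s_1(1))$, a one-element subpartition. Here Remark \ref{r:singleelementcomp} is exactly what I need: a component of $s_2$ with a single element has no effect on $\mathcal T$, hence (by the definition of ${\bf p}$ via $\mathcal T$, or by tracking it through \eqref{e:recfortailprobs}, in which $T_1(s_1,s_2)$ built from a singleton $s_2(1)$ contributes nothing beyond the $s_1$-driven terms) one has ${\bf p}(Ls_1,(s_1(1)),Lt-t_1)={\bf p}(Ls_1,\emptyset,Lt-t_1)$. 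Thus the recursion effectively closes within the family ${\bf p}(\cdot,\emptyset,\cdot)$, and a clean induction applies: ${\bf p}(s_1,\emptyset,t)=e^{\bm\lambda(W_1)(t_1-t_0)}\,{\bf p}(Ls_1,\emptyset,Lt-t_1)$, where for $Ls_1$ the analogue of $W_n$ is $[S(L^{n-1}(Ls_1))]^c=[S(L^{n}s_1)]^c=W_{n+1}$, and the shifted time vector $Lt-t_1$ has components $t_{n+1}-t_1$; iterating $m$ times and noting that each stage subtracts the cumulative shift so that at stage $n$ the exponent is $\bm\lambda(W_n)(t_n-t_{n-1})$, we reach the base case ${\bf p}(\emptyset,\emptyset,0)={\bf 1}$ via \eqref{e:trivials2}. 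This yields ${\bf p}(s_1,\emptyset,t)=\bigl(\prod_{n=1}^{m}e^{\bm\lambda(W_n)(t_n-t_{n-1})}\bigr){\bf 1}$, and left-multiplying by $\alpha$ gives \eqref{e:tailwhens1simple}.

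The step I expect to require the most care is the bookkeeping of the time-shift: each application of \eqref{e:tailsc} replaces the time vector $t$ by $Lt-t_1$, so after $n$ applications the running argument is $t$ with its first $n$ entries removed and $t_n$ subtracted from the remainder; one must check that, reindexed, the $n$-th exponential factor that has been peeled off carries the increment $t_n-t_{n-1}$ and the correct matrix $\bm\lambda(W_n)$ rather than, say, $\bm\lambda(W_n)t_n$ with overlapping shifts. Writing the induction hypothesis in the precise form "for any admissible pair $(s_1',t')$ with $|s_1'|<m$, ${\bf p}(s_1',\emptyset,t')=\bigl(\prod_{n=1}^{|s_1'|}e^{\bm\lambda(W_n')(t_n'-t_{n-1}')}\bigr){\bf 1}$ with $W_n'=[S(L^{n-1}s_1')]^c$" makes this transparent, since applying it to $s_1'=Ls_1$, $t'=Lt-t_1$ and using $W_{n}'=W_{n+1}$, $t_n'-t_{n-1}'=t_{n+1}-t_{n}$ for $n\ge1$ and $t_1'-t_0'=t_2-t_1$, together with the peeled factor $e^{\bm\lambda(W_1)(t_1-t_0)}$, reassembles the full product. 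Everything else is the direct invocation of Theorem \ref{t:tailprobs}, Remark \ref{r:singleelementcomp}, and \eqref{e:trivials2}.
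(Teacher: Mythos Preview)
Your proposal is correct and follows essentially the same route the paper sketches: iterate \eqref{e:tailsc}, invoke Remark~\ref{r:singleelementcomp} (through Theorem~\ref{t:tailprobs}) at each step to discard the singleton that appears in the $s_2$ slot, and terminate with \eqref{e:trivials2}. Your explicit bookkeeping of the time shifts and of the identification $W_n'=W_{n+1}$ under $s_1'=Ls_1$ is exactly what the paper leaves implicit. One small remark: your parenthetical alternative of reading the singleton-drop directly off the recursion \eqref{e:recfortailprobs} is not as immediate as you suggest (the integral term there does not vanish just because $s_2(1)$ is a singleton), but your primary justification via Remark~\ref{r:singleelementcomp} and Theorem~\ref{t:tailprobs} is sound and is all that is needed.
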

The formula
\eqref{e:tailwhens1simple} is a generalization of \cite[equation (7)]{assaf1984multivariate} to general finite state Markov
processes. 

If $s_1 = \emptyset$ we have no time constraints
and $P_\alpha({\mathcal T}(\emptyset, s_2,0))$
reduces
to the probability that certain equality and inequality constraints
hold among the stopping times.
This can
be written as the solution of a sequence of linear equations whose
defining matrices are submatrices of the intensity matrix. The details
require
further notation and are left to future work (or to the reader)
except for the
special case of $P_\alpha( \tau_1 = \tau_2)$ which we would like
use in what follows to relate our results to earlier works in the literature.

Define $\nu_0 \doteq 0$,  and for $n > 0$
$\nu_n \doteq \inf \{u > \nu_{n-1}, X_{u} \neq X_{u-} \}.$
The sequence $\{\nu_n\}$ is the jump times of the process $X$.
Define
$\bar{X}_n \doteq X_{\nu_n}.$
$\bar{X}$ is a discrete time Markov chain with state space $\Omega_0$;
it is called the embedded Markov chain of the process $X$.
It follows from \eqref{e:intensities}
that the one step transition matrix of $\bar{X}$  is
\[
\bar{\lambda} \doteq \begin{cases}
-\lambda(i,j)/\lambda(i,i), &\text{ for } i \neq j,\\
	0,&\text{ otherwise.}
\end{cases}
\]
Define $D \in {\mathbb R}^{\Omega_0 \times \Omega_0}$ as
the diagonal matrix 
\[
D(i,j) = 
\begin{cases} -1/\lambda(i,i),&\text{  if } i=j,\\
		0,           & \text{ otherwise.}
\end{cases}
\] 
Left multiplying a matrix by $D$ divides its $i^{th}$ row
by $-\lambda(i,i).$ Therefore, $\bar{\lambda} = 
I + D\lambda.$

Define $\bar{\tau}_k \doteq \inf \{ n: \bar{X}_n \in \Gamma_k \}.$
The event 
$\{\tau_1 = \tau_2\}$ means that $X$ hits the set $\Gamma_1$ 
and $\Gamma_2$ at the same time; 
because this event makes no reference to how
time is measured, it can also be expressed in terms of $\bar{X}$
as $\{\bar{\tau}_1 = \bar{\tau}_2 \}$.

Define the column vector
$q \in {\mathbb R}^{\Omega_0}$, 
$q(i) \doteq P_i( \bar{\tau}_1 = \bar{\tau}_2 ).$ 
Conditioning on the initial position of $X$ implies 
$P_\alpha(\bar{\tau}_1 = \bar{\tau}_2) = \alpha q.$ From here on
we derive a formula for $q$.
Parallel to the arguments
so far, we know that this event happens if and only if $\bar{X}$ hits
$\Gamma_1 \cap \Gamma_2$ before 
$B\doteq (\Gamma_1 - \Gamma_2 ) \cup (\Gamma_2 - \Gamma_1)$. 
Set 
$w \doteq (\Gamma_1 \cup \Gamma_2)^c$. 
$q$ satisfies the boundary conditions 
\begin{equation}\label{e:bcq}
q|_{\Gamma_1 \cap \Gamma_2} = 1 \text{ and }
q|_{B} =0
\end{equation}
and is to be determined only for the states in $w$. 
If a state $i \in w$ cannot communicate with
 $\Gamma_1 \cap \Gamma_2$, $q(i)$ is trivially $0$; let $w'$ denote
the set of states in $w$ that can communicate with $\Gamma_1 \cap \Gamma_2.$
The Markov property of $\bar{X}$ implies that for $i \in w'$
\[
q(i) = \sum_{j \in w'} \bar{\lambda}(i,j) q(j) + 
\sum_{j \in (\Gamma_1\cap \Gamma_2)} \bar{\lambda}(i,j);
\]
or in matrix notation (see \eqref{e:submatrix}):
\begin{align*}
q|_{w'} &= 
\left (\bar{\lambda}|_{w'}\right) q|_{w'} + 
\left(\bar{\lambda}|_{w' \times (\Gamma_1 \cap \Gamma_2)}\right) 
 {\bf 1}|_{\Gamma_1 \cap \Gamma_2}\\
(I - \bar{\lambda})|_{w' } ~~q|_{w'} &= 
\left(\bar{\lambda}|_{w' \times (\Gamma_1 \cap \Gamma_2)}\right) 
 {\bf 1}|_{\Gamma_1 \cap \Gamma_2}
\\
( -D \lambda )|_{w' } ~~q|_{w'} &= 
\left(\bar{\lambda}|_{w' \times (\Gamma_1 \cap \Gamma_2)}\right) 
 {\bf 1}|_{\Gamma_1 \cap \Gamma_2}.
\intertext{
For $i\neq j$,
$\bar{\lambda}(i,j) = -\lambda(i,j)/\lambda(i,i) = 
(D\lambda)(i,j)$ 
and in particular the same holds for $(i,j) \in w' \times (\Gamma_1\cap \Gamma_2)$ and therefore}
( -D \lambda )|_{w' } ~~q|_{w'} &= 
\left(-D\lambda |_{w' \times (\Gamma_1 \cap \Gamma_2)}\right) 
 {\bf 1}|_{\Gamma_1 \cap \Gamma_2}.
\end{align*}
There is no harm in taking the diagonal $D$ out of the projection operation
on both sides of the last display:
\[
 \lambda |_{w' } ~~q|_{w'} = 
\lambda |_{w' \times (\Gamma_1 \cap \Gamma_2)}
 {\bf 1}|_{\Gamma_1 \cap \Gamma_2}.
\]
That all states in $w'$ can communicate with $\Gamma_1 \cap \Gamma_2$
implies that the matrix on the left is invertible and therefore
\begin{equation}\label{e:defqw'}
q|_{w'} = ( \lambda |_{w'} )^{-1} 
\lambda |_{w' \times (\Gamma_1 \cap \Gamma_2)}
 {\bf 1}|_{\Gamma_1 \cap \Gamma_2}.
\end{equation}

\subsection{A second representation of tail probabilities}\label{ss:secreptail}
For a nonnegative integer $n$, denote by ${\mathcal P}(n)$ 
the set of all {\em sub}permutations of $\{1,2,3$ $,...,n\}$, e.g.,
${\mathcal P}(2) = \{ \emptyset, (1), (2), (1,2),(2,1) \}.$
The tail probability formula
\eqref{e:recfortailprobs} conditions on the first time $\tau'$ that
 one of the equality constraints is attained in the time interval 
$[0,t_1]$ and writes what happens after that as a recursion. 
What can happen between $\tau'$ and $t_1$?
A number of other equalities can be attained and
rather than pushing these into the recursion, one can treat
them inside the integral using a density similar to 
\eqref{e:densityt1tk}:
\begin{align}\label{e:recfortailprobsalt}
{\bf p}&(s_1,s_2,t) = \notag\\
&~~\sum_{ \pi \in {\mathcal P}(|s_2|) } \left 
( \int_{A_\pi} \left( \prod_{n=1}^{|\pi|} e^{(v_n-v_{n-1})\bm{\lambda}(W_n)} J_n
\right) e^{ (t_1- v_{|\pi|}) \bm{\lambda}(W)} dv \right) \\
&~~~~~\cdot {\bf p}(L s_1 , s_2 - s_2(\pi) + s_1(1), Lt -t_1 ),\notag
\end{align}
where
$v_0 = 0$ and
\begin{align*}
W_n 
&\doteq [S(s_1 \cup s_2 - \cup_{n_1=1}^{n}s_2(\pi(n_1)))]^c,~~
T_n \doteq \left[\bigcap_{k \in s_2(\pi(n))} \Gamma_k\right] \cap W_{n+1},\\
s_2(\pi) & \doteq \cup_{m=1}^{|\pi|} s_2(\pi(m)),~~ W \doteq [S(s_1 \cup s_2 - s_2(\pi))]^c,\\
A_\pi &\doteq 
\left\{ 
v \in {\mathbb R}^{|\pi|}: 0 < v_1 <v_2 < \dots < v_{|\pi|} \le t_1  
\right\},\\
J_n &\doteq \bm{\lambda}(W_n,T_n),
\end{align*}
$dv$ is the $|\pi|$ dimensional Lebesgue measure on 
${\mathbb R}^{|\pi|}$ 
for $|\pi| > 0$; 
$A_\pi \doteq \{0\}$ 
and $dv$ is the trivial measure on $\{0\}$
for $|\pi| = 0$.
The proof involves no additional ideas and is omitted.

\subsection{Conditional formulas}\label{ss:conditional}
The proof of Theorem \ref{t:densityt1tk} shows how one can use the density formula \eqref{e:densityt1tk}
to write down the regular conditional distribution of $\tau$ given ${\mathscr F}_{\tau'}$.
One can do the same for 
${\mathscr F}_{u_0}$, where $u_0\in {\mathbb R}_+$ is a given deterministic time.
To that end, introduce the set valued process
\[
V_u \doteq \{ k \in K, \tau_k < u\}.
\]
$K$ is finite, then so is its power set $2^K$, thus $V_u$ takes
values in a finite set.
$V_u$ is the collection of $\Gamma_k$ that $X$ has visited up to time $u$.
For ease of notation we will denote the complement of $V_u$ by $\bar{V}_u$.
The times $\tau|_{V_{u_0}}$ are known by time $u_0$ and hence 
they are constant
given ${\mathscr F}_{u_0}$. 
Thus, we only need to write down the regular conditional density
of $\tau|_{\bar{V}_{u_0}}$, i.e., the hitting times to the $\Gamma_k$ that have not been visited
by time $u_0$. From here on the idea is the same as in the proof of Theorem \ref{t:densityt1tk}.
Define $\hat{X}_u \doteq X_{u + u_0}$ and for $k \in \bar{V}_{u_0}$
\[
\hat{\tau}_k \doteq \inf \{u : \hat{X}_u \in \Gamma_k \}.
\]
The definitions of $\hat{X}$ and $\hat{\tau}$ imply
\begin{equation}\label{e:rectau}
\hat{\tau} = \tau|_{\bar{V}_{u_0}} - u_0.
\end{equation}
$\hat{X}_0 = X_{u_0}$ is a constant given ${\mathscr F}_{u_0}$.
Thus the process $\hat{X}$ has exactly the same distribution as $X$ with initial point
$X_{u_0}$ and Theorem \ref{t:densityt1tk} applies and gives the density of $\hat{\tau}$,
which is, by \eqref{e:rectau}, the regular conditional distribution of $\tau|_{\bar{V}_{u_0}}-u_0$.
Therefore, for any bounded 
measurable $g:{\mathbb R}^{\bar{V}_{u_0}} \rightarrow {\mathbb R}$ 
and a partition $s'$ of $\bar{V}_{u_0}$
\[
{\mathbb E}\left[g
\left( 
\tau|_{\bar{V}_{u_0}} \right)
1_{R_{s'}}
\left( 
\tau|_{\bar{V}_{u_0}}
\right)
  | {\mathscr F}_{u_0} \right]= \int_{R_{s'}}
g(u_0 + u) f(\delta_{X_{u_0}}, u, \bar{V}_{u_0}) d_{s'}u.
\]
We record this as
\begin{proposition}\label{p:conditional}
The regular conditional density of $\tau|_{\bar{V}_{u_0}}-t_0$
given ${\mathscr F}_{u_0}$ is 
 $f(\delta_{X_{u_0}}, t, \bar{V}_{u_0})$.
\end{proposition}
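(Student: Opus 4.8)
The plan is to obtain Proposition~\ref{p:conditional} as a direct consequence of Theorem~\ref{t:densityt1tk} combined with the Markov property applied at the deterministic time $u_0$; indeed the argument is exactly the one sketched in the paragraphs preceding the statement, and I would only organize it into a proof. First I would note that the coordinates $\tau|_{V_{u_0}}$ are ${\mathscr F}_{u_0}$-measurable, hence ${\mathscr F}_{u_0}$-a.s.\ constant, so the sole content of the proposition is the identification of the regular conditional law of $\tau|_{\bar{V}_{u_0}}$ given ${\mathscr F}_{u_0}$.

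Next I would bring in the shifted process $\hat{X}_u \doteq X_{u+u_0}$ together with the hitting times $\hat{\tau}_k \doteq \inf\{u : \hat{X}_u \in \Gamma_k\}$ for $k \in \bar{V}_{u_0}$. The Markov property gives that, conditionally on ${\mathscr F}_{u_0}$, $\hat{X}$ is a finite state Markov process with intensity matrix $\lambda$ and starting point $\hat{X}_0 = X_{u_0}$, the latter being ${\mathscr F}_{u_0}$-measurable. Since, on $\{\tau_k < \infty\}$, each $\tau_k$ is absolutely continuous (Theorem~\ref{t:densityt1} with $b = \emptyset$), we have $P(\tau_k = u_0) = 0$ for every fixed $k$; consequently, off a null set, $X_{u_0} \notin \bigcup_{k \in \bar{V}_{u_0}} \Gamma_k$ and the relation \eqref{e:rectau}, $\hat{\tau} = \tau|_{\bar{V}_{u_0}} - u_0$, holds. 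Thus it remains only to compute the conditional density of $\hat{\tau}$.

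Finally I would apply Theorem~\ref{t:densityt1tk} to $\hat{X}$: for each fixed $j \in \Omega_0 - \bigcup_{k \in \bar{V}_{u_0}} \Gamma_k$, that theorem says that under $P_j$ the vector $\hat{\tau}$ has density $f(\delta_j, \cdot, \bar{V}_{u_0})$ on each region $R_{s'}$ of ${\mathbb R}_+^{\bar{V}_{u_0}}$. To move from ``law under $P_j$'' to ``regular conditional law given ${\mathscr F}_{u_0}$'' I would invoke Lemma~\ref{l:computecondexp} with $Y_1 = X_{u_0}$, $Y_2 = \hat{\tau}$ and ${\mathscr G} = {\mathscr F}_{u_0}$ --- equivalently, since $X_{u_0}$ takes only finitely many values, simply condition on each event $\{X_{u_0} = j\}$ separately --- and conclude that the regular conditional density of $\hat{\tau}$ given ${\mathscr F}_{u_0}$ is $f(\delta_{X_{u_0}}, \cdot, \bar{V}_{u_0})$; by \eqref{e:rectau} this is precisely the asserted regular conditional density of $\tau|_{\bar{V}_{u_0}} - u_0$. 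The only step here that is not a verbatim citation is the substitution of the random initial state $X_{u_0}$, and it is exactly what Lemma~\ref{l:computecondexp} together with the finiteness of $\Omega_0$ renders routine; that bookkeeping --- rather than any real difficulty --- is the main thing to attend to.
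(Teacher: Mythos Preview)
Your proposal is correct and follows essentially the same route as the paper, which treats the preceding paragraphs of subsection~\ref{ss:conditional} as the proof and merely records the conclusion as Proposition~\ref{p:conditional}. If anything, you add a detail the paper leaves implicit: the observation that $P_\alpha(\tau_k = u_0) = 0$ ensures $X_{u_0} \notin \bigcup_{k \in \bar{V}_{u_0}} \Gamma_k$ almost surely, so that the hypothesis of Theorem~\ref{t:densityt1tk} on the initial distribution is met.
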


\section{Absorbing $\{\Gamma_k\}$ and connections to earlier results}\label{s:absorbing}
A nonempty subset $a \subset \Omega_0$ is said to be {\em absorbing} if
$\lambda(i,j) = 0$ for all $ i \in a$ and $j \in a^c$, i.e.,
if $\bm{\lambda}(a,a^c) = 0$.
We next derive an alternative expression
for the density formula \eqref{e:densityt1tk} under the
assumption that
all $\{ \Gamma_k, k \in K \}$ are absorbing.
The first step is 
\begin{proposition}\label{p:eltwhenabsorbing}
\begin{equation}\label{e:eltwhenabsorbing}
p_{\alpha,a}^u = \alpha e^{\bm{\lambda}(a^c) u} = 
\alpha e^{ \lambda u} I_{a^c}
\end{equation}
if $a$ is absorbing and $\alpha|_{a} = 0.$
\end{proposition}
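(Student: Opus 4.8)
The plan is to establish the two asserted equalities in \eqref{e:eltwhenabsorbing} separately, using absorption of $a$ to decouple the constrained dynamics from the full dynamics. The first equality, $p_{\alpha,a}^u = \alpha e^{\bm{\lambda}(a^c)u}$, is almost immediate: since $\alpha|_a = 0$, Lemma \ref{l:avoidG1} applies verbatim with $d = a$ and gives $p_{\alpha,a}^u = \alpha e^{u\bm{\lambda}(a^c)}$. So the only real content is the second equality, $\alpha e^{\bm{\lambda}(a^c)u} = \alpha e^{\lambda u} I_{a^c}$, for which I would exploit the absorbing structure of $a$.

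First I would record what absorption says in the notation of the paper: $\bm{\lambda}(a,a^c) = I_a \lambda I_{a^c} = 0$. Using \eqref{e:commutation} to write $I = I_a + I_{a^c}$, decompose $\lambda$ block-wise as
\[
\lambda = I\lambda I = (I_a + I_{a^c})\lambda(I_a + I_{a^c}) = \bm{\lambda}(a) + \bm{\lambda}(a,a^c) + \bm{\lambda}(a^c,a) + \bm{\lambda}(a^c),
\]
and absorption kills the second term, leaving $\lambda = \bm{\lambda}(a) + \bm{\lambda}(a^c,a) + \bm{\lambda}(a^c)$. The key algebraic observation is then that right-multiplication by $I_{a^c}$ annihilates everything except $\bm{\lambda}(a^c)$: indeed $\bm{\lambda}(a)I_{a^c} = I_a\lambda I_a I_{a^c} = I_a \lambda I_{a \cap a^c} = 0$ and likewise $\bm{\lambda}(a^c,a)I_{a^c} = I_{a^c}\lambda I_a I_{a^c} = 0$, while $\bm{\lambda}(a^c)I_{a^c} = I_{a^c}\lambda I_{a^c}I_{a^c} = \bm{\lambda}(a^c)$ since $I_{a^c}^2 = I_{a^c}$ (again from \eqref{e:commutation}). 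So $\lambda I_{a^c} = \bm{\lambda}(a^c)$.

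Now I would expand $\alpha e^{\lambda u} I_{a^c}$ as the power series $\alpha\left(\sum_{n\ge 0} \frac{u^n}{n!}\lambda^n\right)I_{a^c}$ and push the factor $I_{a^c}$ through. Since $\alpha|_a = 0$ means $\alpha I_{a^c} = \alpha$ — more precisely $\alpha = \alpha I_{a^c}$ because $\alpha$ has no mass on $a$ — one can also insert $I_{a^c}$ on the left. The upshot is $\alpha \lambda^n I_{a^c} = \alpha I_{a^c}\lambda^n I_{a^c}$, and using $\lambda I_{a^c} = \bm{\lambda}(a^c)$ together with $I_{a^c}\lambda I_{a^c} = \bm{\lambda}(a^c)$ and $I_{a^c}\bm{\lambda}(a^c) = \bm{\lambda}(a^c)$, an easy induction gives $\alpha \lambda^n I_{a^c} = \alpha \bm{\lambda}(a^c)^n$ for every $n \ge 0$ (the $n=0$ case being $\alpha I_{a^c} = \alpha = \alpha \bm{\lambda}(a^c)^0$ read against $\alpha|_a = 0$). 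Summing the series yields $\alpha e^{\lambda u} I_{a^c} = \alpha e^{u\bm{\lambda}(a^c)}$, which is the claim.

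The main obstacle — really the only subtlety — is bookkeeping the left/right actions of the projectors $I_a, I_{a^c}$ correctly and making sure the induction step $\alpha\lambda^{n+1}I_{a^c} = \alpha\bm{\lambda}(a^c)^{n+1}$ goes through; this hinges on the identity $\lambda I_{a^c} = \bm{\lambda}(a^c)$ established above, after which one writes $\alpha\lambda^{n+1}I_{a^c} = (\alpha\lambda^n)(\lambda I_{a^c}) = (\alpha\lambda^n I_{a^c})\bm{\lambda}(a^c)$ — legitimate because $\bm{\lambda}(a^c) = I_{a^c}\bm{\lambda}(a^c)$ lets the inserted $I_{a^c}$ reappear — and then applies the inductive hypothesis. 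Everything else is routine manipulation of \eqref{e:commutation} and \eqref{e:lambdamatrix}.
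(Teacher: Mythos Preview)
Your argument is correct, but it takes a different route from the paper. The paper proves the second equality probabilistically: since $\alpha e^{\lambda u}$ is the distribution of $X_u$ and $a$ is absorbing, being in a state $j\in a^c$ at time $u$ forces $X_v\notin a$ for all $v\le u$, so $P_\alpha(X_u=j)=p_{\alpha,a}^u(j)$ for $j\in a^c$; both sides vanish on $a$, and the equality follows. Your proof is instead purely algebraic: from absorption you extract the matrix identity $\lambda I_{a^c}=\bm{\lambda}(a^c)$, and then push $I_{a^c}$ through the exponential series term by term via the induction $\alpha\lambda^n I_{a^c}=\alpha\bm{\lambda}(a^c)^n$. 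The paper's version makes the meaning transparent---it is exactly the statement that an absorbing set, once avoided at the endpoint, was avoided throughout---whereas your version never leaves linear algebra and would work verbatim for any matrix $\lambda$ with $I_a\lambda I_{a^c}=0$ and any row vector $\alpha$ with $\alpha I_{a^c}=\alpha$, without reference to Markov processes. Both are short; the identity $\lambda I_{a^c}=\bm{\lambda}(a^c)$ you isolate is a clean algebraic restatement of absorption that the paper does not make explicit.
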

\begin{proof}
We already know from Lemma \ref{l:avoidG1} that the first equality holds.
Therefore, it only remains to show 
$p_{\alpha,a}^u = \alpha e^{\lambda u} I_{a^c}.$
\cite[Theorem 3.4, page 48]{asmussen2003applied} 
implies that the distribution of $X$ at time
$u$ is $\alpha e^{\lambda u}$, i.e., 
$P_\alpha(X_u = j ) =  [\alpha e^{\lambda u}](j)$
for all $j \in \Omega_0$. 
That $a$ is absorbing implies that
if $X_{u_0}\in a$ then $X_u \in a$ 
for all $u \ge u_0$,
Therefore for $j \in a^c$
\[
P_\alpha( X_u= j)  = P_\alpha( 
X_u = j, X_v(\omega) \notin a , v\le u ),
\]
i.e.,
\begin{equation}\label{e:eltwhenabsorbinge1}
 (\alpha p_{\alpha,a}^u)|_{a^c} = (\alpha e^{\lambda u} I_{a^c})|_{a^c}.
\end{equation}
The definition of $p_{\alpha,a}^u$
and $\alpha|_{a} = 0$ imply
 $(\alpha p_{\alpha,a}^u)|_{a} = 0$; The definition of $I_{a^c}$ implies
$(\alpha e^{\lambda u} I_{a^c})|_{a} = 0$. This and \eqref{e:eltwhenabsorbinge1}
imply \eqref{e:eltwhenabsorbing}.
\end{proof}
Proposition \ref{p:eltwhenabsorbing} says the following:
if $a$ is absorbing 
then
$\alpha e^{\bm{\lambda}(a^c) u}$ 
is the same as $\alpha e^{ \lambda u} I_{a^c}$ 
and both describe the probability of each state in
$a^c$ at time $t$ over all paths 
that avoid $a$ in the time interval $[0,t]$.
The first expression
ensures that all paths under consideration avoid the set 
$a$ by setting the jump rates into $a$ to $0$.
The second expression does this by striking out those paths 
that end
up in one of the states in $a$ 
(the $I_{a^c}$ term does this); 
this is enough because $a$ is absorbing: once a path gets
into $a$ it will stay there and
one can look at the path's position at time $t$ to figure out
whether its weight should contribute to $p_{\alpha,a}^u$.
In the general case this is not possible, hence the
$\bm{\lambda}(a^c)$ in the exponent.

The previous proposition implies that one can replace the
$\bm{\lambda}(W_n)$ 
in the density formula
\eqref{e:densityt1tk} with $\lambda$:
\begin{proposition}\label{p:densforabs}
For $s \in {\mathcal S}$  and $t\in R_s$
let $\bar{t}$ be defined as in \eqref{e:bartWT} and 
let $f$ be the density given in Theorem \ref{t:densityt1tk}.
Then
\begin{equation}\label{e:densforabs}
f(\alpha, t,K)=
 \alpha \left( \prod_{n=1}^{|s|} e^{\lambda(\bar{t}_{n} -\bar{t}_{n-1})}  
\bm{\lambda}(W_n,T_n) \right) {\bf 1}
\end{equation}
if all $\Gamma_k$ are absorbing.
\end{proposition}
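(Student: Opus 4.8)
The plan is to start from the density formula \eqref{e:densityt1tk}, namely
\[
f(\alpha, t,K) = \alpha \left( \prod_{n=1}^{|s|} e^{\bm{\lambda}(W_n)(\bar{t}_n -\bar{t}_{n-1})}
\bm{\lambda}(W_n,T_n) \right) {\bf 1},
\]
and replace each exponential factor $e^{\bm{\lambda}(W_n)(\bar{t}_n-\bar{t}_{n-1})}$ by $e^{\lambda(\bar{t}_n-\bar{t}_{n-1})}$. The central observation is Proposition \ref{p:eltwhenabsorbing}: when $a$ is absorbing, $e^{\bm{\lambda}(a^c)u}$ and $e^{\lambda u}I_{a^c}$ agree when applied to a row vector supported on $a^c$. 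Since $W_n = [S(L^{n-1}s)]^c$ is the complement of a union of sets $\Gamma_k$, and each $\Gamma_k$ is absorbing, the set $W_n^c = S(L^{n-1}s)$ is a union of absorbing sets, hence itself absorbing; thus Proposition \ref{p:eltwhenabsorbing} applies with $a = W_n^c$. The proof therefore reduces to propagating this substitution through the product, keeping track of the fact that the equality in Proposition \ref{p:eltwhenabsorbing} only holds on row vectors with no mass on $W_n^c$.

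The key steps, in order, are as follows. First I would record that for each $n$, $W_n^c = S(L^{n-1}s)$ is absorbing (a finite union of absorbing sets is absorbing, which is immediate from $\bm{\lambda}(a,a^c)=0$ for each piece and the fact that if $i \in \cup_k \Gamma_k$ then any $j$ with $\lambda(i,j)\neq 0$ lies in the same $\Gamma_k$, hence in the union). Second, I would argue inductively on $n$ that the partial row vector $\alpha_n \doteq \alpha \prod_{m=1}^{n-1} e^{\bm{\lambda}(W_m)(\bar{t}_m-\bar{t}_{m-1})}\bm{\lambda}(W_m,T_m)$ is supported on $W_n$, i.e.\ $\alpha_n|_{W_n^c}=0$: indeed $\alpha_1 = \alpha$ has no mass on $W_1^c = S(s) \supset$ (initial-distribution assumption), and the factor $\bm{\lambda}(W_{n-1},T_{n-1}) = I_{W_{n-1}}\lambda I_{T_{n-1}}$ places all resulting mass on $T_{n-1} \subset W_n$ by \eqref{e:bartWT}, while $e^{\bm{\lambda}(W_{n-1})\cdot}$ keeps mass inside $W_{n-1}$; combining with $W_{n-1}\supset W_n$ (since $L^{n-1}s$ has fewer components, its $S$ is smaller, so the complement $W_n$ is larger — actually $W_n\subset W_{n-1}$ because $S(L^{n-2}s)\subset S(L^{n-1}s)$ is false; one must instead note $T_{n-1}\subset W_n$ directly, which is exactly how $T_{n-1}$ is defined in \eqref{e:bartWT}) one gets $\alpha_n$ supported on $W_n$. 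Third, with $\alpha_n|_{W_n^c}=0$ established, Proposition \ref{p:eltwhenabsorbing} gives $\alpha_n e^{\bm{\lambda}(W_n)(\bar{t}_n-\bar{t}_{n-1})} = \alpha_n e^{\lambda(\bar{t}_n-\bar{t}_{n-1})} I_{W_n}$. Fourth, I would absorb the extra $I_{W_n}$ into the next factor, using $I_{W_n}\bm{\lambda}(W_n,T_n) = I_{W_n} I_{W_n}\lambda I_{T_n} = \bm{\lambda}(W_n,T_n)$ since $I_{W_n}^2 = I_{W_n}$. Iterating this replacement for $n=1,\dots,|s|$ converts the product in \eqref{e:densityt1tk} into the product in \eqref{e:densforabs}, proving the claim.

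The main obstacle, modest as it is, will be step two: carefully verifying that each intermediate row vector $\alpha_n$ really has no mass on $W_n^c$, so that Proposition \ref{p:eltwhenabsorbing} is legitimately applicable at every stage. This requires unwinding the definitions in \eqref{e:bartWT} to confirm that $T_n = [\cap_{k\in s(n)}\Gamma_k]\cap W_{n+1}$ is contained in $W_{n+1}$ (immediate) and that $\bm{\lambda}(W_n,T_n)$ produces output supported exactly on $T_n$, together with the initial-distribution hypothesis $\alpha|_{\cup_k\Gamma_k}=0$ that handles the base case $n=1$. Once this bookkeeping is in place, the substitution and the idempotency identity $I_{W_n}\bm{\lambda}(W_n,T_n)=\bm{\lambda}(W_n,T_n)$ make the rest purely formal. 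I would also remark that, alternatively, one could give a one-line proof by observing that the whole argument of Theorem \ref{t:densityt1tk} goes through verbatim with $e^{\lambda\cdot}I_{W_n}$ in place of $e^{\bm{\lambda}(W_n)\cdot}$ once the absorbing hypothesis lets Proposition \ref{p:eltwhenabsorbing} replace Lemma \ref{l:avoidG1}, but the direct substitution from \eqref{e:densityt1tk} is cleaner and is the route I would present.
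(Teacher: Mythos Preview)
Your proposal is correct and follows essentially the same route as the paper: use that $W_n^c=S(L^{n-1}s)$ is absorbing (union of absorbing sets), apply Proposition~\ref{p:eltwhenabsorbing} to swap $e^{\bm{\lambda}(W_n)u}$ for $e^{\lambda u}I_{W_n}$, and absorb the stray $I_{W_n}$ via $I_{W_n}\bm{\lambda}(W_n,T_n)=\bm{\lambda}(W_n,T_n)$. The paper packages the induction slightly more cleanly by defining two parallel partial-product sequences $\alpha_n$ and $\hat\alpha_n$ (one with $e^{\bm\lambda(W_n)\cdot}$, one with $e^{\lambda\cdot}$) and proving $\alpha_n=\hat\alpha_n$ directly, which sidesteps the minor ambiguity in your step two about whether the ``$\alpha_n$ supported on $W_n$'' claim refers to the original or the already-modified partial product.
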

\begin{proof}
Set $\hat{\alpha}_0 = \alpha_0 \doteq \alpha$, 
\begin{equation*}
\hat{\beta}_n \doteq \hat{\alpha}_n e^{\lambda(\bar{t}_n -\bar{t}_{n-1})},~~
\beta_n \doteq \alpha_n e^{\bm{\lambda}(W_n)(\bar{t}_n -\bar{t}_{n-1})},
\end{equation*}
$n \in \{0,1,2,3,...,|s|\}$
and for $n > 0$
\begin{equation*}
\hat{\alpha}_n \doteq \hat{\beta}_{n-1}  
\bm{\lambda}(W_n,T_n),~~~
\alpha_n \doteq  \beta_{n-1} 
\bm{\lambda}(W_n,T_n).
\end{equation*}
The right side of \eqref{e:densforabs} is $\hat{\alpha}_{|s|} ~{\bf 1}$
and its left side is $\alpha_{|s|}~ {\bf 1}.$
We will prove
\begin{equation}\label{e:toprovedensabs}
\alpha_n = \hat{\alpha}_n
\end{equation}
by induction; setting $n=|s|$ in the last display
will give \eqref{e:densforabs}. 
For $n=0$ \eqref{e:toprovedensabs}
is true by definition; 
assume that it holds for $0 <n-1 < |s|$; we will argue that this
implies that it must also for $n$.
Union of absorbing sets is also absorbing, therefore
$S(L^{n-1}s)$ is absorbing. 
This, $W_n = \Omega_0 - S(L^{n-1}s)$,
the induction hypothesis and \eqref{e:eltwhenabsorbing} (set $a = S(L^{n-1}s)$)
imply
\begin{align}
\alpha_n = \beta_n \bm{\lambda}(W_n,T_n)   &= \notag 
\alpha_{n-1} e^{\bm{\lambda}(W_n)(\bar{t}_n -\bar{t}_{n-1})} 
\bm{\lambda}(W_n,T_n)  \\
&=\notag
\hat{\alpha}_{n-1} 
 e^{\lambda(\bar{t}_n -\bar{t}_{n-1})} I_{W_n} \bm{\lambda}(W_n,T_n)\\
&= \hat{\beta}_{n-1}  I_{W_n} \bm{\lambda}(W_n,T_n).\notag
\intertext{The identities \eqref{e:lambdamatrix} and \eqref{e:commutation} imply
$I_{ W_n} \bm{\lambda}(W_n,T_n) = \bm{\lambda}(W_n,T_n)$ and therefore}
&= \hat{\beta}_{n-1} \bm{\lambda}(W_n,T_n) =\hat{\alpha}_n.
\notag 
\end{align}
This completes the induction step and therefore the whole proof.
\end{proof}

Using the same ideas and calculations
as in the previous proof
one can write the tail probability
formula \eqref{e:recfortailprobs}
as 
\begin{align*}
&{\bf p}(s_1,s_2,t) = \\
&~~
\int_0^{t_1} e^{\lambda u} 
\bm{\lambda}(W, T(s_1,s_2))
\left( \sum_{n=1}^{|s_2|} I_{T_n(s_1,s_2)}~~ {\bf p}(s_1, s_2 - s_2(n), t-u)\right) du
\notag \\
&~~~+e^{\lambda t_1  }I_{W}
 {\bf p}\left( L s_1 , s_2 + s_1(1), Lt - t_1
\right)\notag
\end{align*}
and \eqref{e:tailsc}  as
\begin{equation}\label{e:tailscabs}
{\bf p}(s_1,\emptyset,t) = e^{\lambda t_1}I_{S(s_1)^c} 
{\bf p}(s_1 - s_1(1),
( s_1(1) ), Lt - t_1 )
\end{equation}
when $\{\Gamma_k, k \in K \}$ are absorbing.

Let us briefly point out another possible modification of the density
formula for absorbing $\{\Gamma_k\}.$ Define
\[
\hat{T}_0 = \Omega_0 - S(s),~~~
\hat{T}'_n \doteq \bigcap_{ \cup_{ m \le n } s(m)} \Gamma_k,~~~
\hat{T}_n = \hat{T}'_n 
- S(L^n(s)),~~~ \hat{W}_{n} = \hat{T}_{n-1},
\]
where $s \in {\mathcal S}$ and $n \in \{1,2,3,...,|s|\}$.
If $\{\Gamma_k\}$ are absorbing one can replace the target
and waiting sets $T_n$ and $W_n$  of  \eqref{e:bartWT}  with
$\hat{T}_n$ and $\hat{W}_n$ defined above. One can prove that
the density formula continues to hold after this modification
with an argument
parallel to the proof of Proposition \ref{p:densforabs} 
using in addition that the intersection
of absorbing sets is again absorbing.

\subsection{Connections to earlier results}\label{ss:linkstoearlier}\label{ss:linkstoearlier}
This subsection gives several examples of how to express
density/distribution formulas 
from the prior
phase-type distributions literature
as special cases of the ones derived in 
the present work.

We begin by three formulas from \cite{assaf1984multivariate}. The first two concern
a single hitting time and the last a pair. \cite{assaf1984multivariate} denotes
the state space of $X$ by $E$ 
assumes that it has an absorbing element $\Delta$,
denotes 
$\lambda|_{{ \left\{ \Delta \right \}^c }}$ 
by $A$ and $\inf\{u: X_u = \Delta \}$
by $T$. 
\cite{assaf1984multivariate} 
also uses the letter 
$\alpha$ 
to denote the initial distribution of $X$, 
but over the set $\hat{E} \doteq E - \{ \Delta \}$ 
(and implicitly assuming $P(X_0 = \Delta) = 0$).
We will use the symbol $\hat{\alpha}$
to denote the `$\alpha$ of \cite{assaf1984multivariate}.'
 The relation between
$\alpha$ and $\hat{\alpha}$ is
$\alpha|_{\{\Delta\}^c} = \hat{\alpha}$.

The first line of
\cite[equation (2), page 690]{assaf1984multivariate} 
says
$P_\alpha(T > u) = \hat{\alpha} e^{Au} {\bf e}$ where ${\bf e}$ is the $|E|-1$
dimensional vector with all component equal to $1$. The corresponding formula
in the present work is \eqref{e:tail1} where one takes $d = \{\Delta\}$.
The following facts imply the equality of these formulas
1) 
$\bm{\lambda}(d^c)|_{d^c} = A$ 
and 2) 
the row of $\bm{\lambda}(d^c)$ corresponding to $\Delta$ is $0$.
The second line of the same equation gives $-\hat{\alpha} e^{uA} A{\bf e}$ 
as the
density of $T$. The corresponding formula here is \eqref{e:dist0} with $b=\emptyset$,
and $a=\{ \Delta \}$
for which it reduces to $e^{u\bm{\lambda}(a^c) } \bm{\lambda}(a^c,a) {\bf 1}$.
This time, 1), 2) and the following fact imply the equality of the formulas:
the row sums of $\lambda$ are zero, therefore
$\bm{\lambda}(a^c, \Delta)|_{a^c} = \lambda|_{a^c} {\bf e} = A { \bf e}.$
The matrix $\bm{\lambda}(a^c, a)$ is  the column of $\lambda$ corresponding to
$\Delta$; one way to write it is as the negative of
 the sums of the rest of the columns, this is
what the last equality says.

\cite[Equation (5), page 692]{assaf1984multivariate} concerns the following setup 
(using the notation of that paper):
we are given two set $\Gamma_1, \Gamma_2 \subset E$ with $\Gamma_1 \cap \Gamma_2 = 
\{ \Delta\}$, $T_i$ is the first hitting time to $\Gamma_k.$ 
The formula just cited
says
\begin{equation}\label{e:asad2d}
P_\alpha( T_1 = T_2 > u) = \hat{\alpha} e^{Au} A^{-1} 
( A g_1 g_2 - [A,g_1]- [A,g_2] ) {\bf e},
\end{equation}
where $g_i = I_{\Gamma_k}|_{{\{\Delta\}^c}}$ 
and for two matrices $B$ and $C$, $[B,C] \doteq BC- CB$.
The absorbing property of $\Gamma_1$ and $\Gamma_2$ implies that
the matrix inside the parenthesis in the last display equals
$g'A$, 
where $g'=I_{(\Gamma_1 \cup \Gamma_2)^c}|_{\hat{E}}$
i.e., the same matrix as $A$ except that the rows whose indices
appear in $\Gamma_1 \cup \Gamma_2$ are replaced with $0$. Thus
$( A g_1 g_2 - [A,g_1]- [A,g_2] ) {\bf e}$ is another way to
take the $\Delta$ column of $\lambda$ and replace its components
whose indices appear in $\Gamma_1 \cup \Gamma_2$ with $0$. 
Denote
this vector by $C_\Delta$. Then the right side of
\eqref{e:asad2d} is 
\begin{equation}\label{e:abbcomp}
\alpha|_{\hat{E}} \left( e^{\lambda u}|_{\hat{E}}\right)
 A^{-1} C_\Delta.
\end{equation}
The same probability is expressed by a special case of \eqref{e:tailscabs};
for the present case one sets $K=\{1,2\}$, $s_1 = (\{1,2\} )$; for these
values, \eqref{e:tailsc} and
conditioning on the initial state gives
\begin{equation}\label{e:abbascur}
P_\alpha( \tau_1 = \tau_2 > u) = 
\alpha e^{\lambda u} 
I_{w}
{\bf p}(\emptyset, (\{1,2\}), 0),
\end{equation}
where
$w=(\Gamma_1 \cup \Gamma_2)^c$. 
Remember that
we have denoted the last probability as $q$
and derived for it the formulas \eqref{e:bcq}  and \eqref{e:defqw'}.
The article
\cite{assaf1984multivariate} assumes that all states can communicate
with $\Delta$, which implies that $w'$ of \eqref{e:defqw'} equals $w$.
This 
and $\Gamma_1 \cap \Gamma_2 = \{\Delta\}$ imply
 $ \lambda|_{w'\times \Gamma_1 \cap \Gamma_2} {\bf 1}$ in \eqref{e:defqw'}
equals $\lambda|_{(\Gamma_1 \cup \Gamma_2)^c \times \Delta}$
i.e.,  the $\Delta$ column of $\lambda$ projected to its 
indices in $(\Gamma_1 \cup\Gamma_2)^c$, i.e., 
$C_\Delta|_{w}$.   The only difference between $C_\Delta$ and
$C_\Delta|_{w}$ is that the former has zeros in its extra dimensions.
This and the absorbing property of $\Gamma_k$ imply
\[(\lambda|_w)^{-1} C_\Delta|_w = (A^{-1} C_\Delta)|_w.
\]
Note that we are commuting the projection operation and the inverse
operation; this is where the absorbing property is needed.
The last display, \eqref{e:bcq} and \eqref{e:defqw'} give
$I_{w}
{\bf p}(\emptyset, (\{1,2\}), 0)|_{\hat{E}}= q|_{\hat{E}}
=  A^{-1} C_\Delta$.
This and $\alpha(\{\Delta\}) =0$ imply that
one can rewrite the right side of \eqref{e:abbascur} as
\[
\alpha e^{\lambda u } ( A^{-1} C_\Delta)|^{E}.
\]
Once again $\alpha(\Delta)=0$ 
implies that the last expression 
equals \eqref{e:abbcomp}.

The density formula \cite[(3.1.11)]{herbertsson2011modelling} will provide our last example.
The process $X$ of \cite{herbertsson2011modelling} 
is a random walk (with absorbing boundary) 
on ${\mathbb Z}_2^m$ 
with
increments $\{-e_k, k=1,2,3,...,m\}$ where $e_k$ is the unit vector with $k^{th}$ coordinate
equal to $1$ (\cite{herbertsson2011modelling} uses different but equivalent notation, in particular
the name of the process is $Y$ and its state space is represented by subsets of $\{1,2,3,...,m\}$; the notation
of this paragraph is chosen to ease discussion here and in the ensuing sections).
\cite{herbertsson2011modelling} takes
$\Gamma_k = \{ z \in {\mathbb Z}_2^m: z_k = 0 \}$ ($\Delta_n$, see the display after \cite[(2.3)]{herbertsson2011modelling}).
and assumes them to be absorbing.
The jump rate for the increment $-e_k$ is assumed to be
$\langle X, b_k \rangle + a_k$ for fixed
$b_k \in {\mathbb R}^m$ and $a_k \in {\mathbb R}$ (given in \cite[(2.1)]{herbertsson2011modelling}).
A key property of this setup is this: take any 
collection $\{\Gamma_{k_1}, \Gamma_{k_2},...,\Gamma_{k_n} \}$
with $n>1$; because the only increments of $X$ are the $\{-e_k\}$,
the process cannot enter the sets in the collection at the same time.
Thus, in this formulation,
 $X$ must hit the $\{\Gamma_k\}$
at separate times and 
the
distribution of $\tau$ has no singular part, i.e., $P( \tau \in R_s) =0$ for $|s| < m,$  and one needs only
the density of $\tau$ with respect to the full 
Lebesgue measure in ${\mathbb R}^m$ (the ``absolutely 
continuous part''). As noted in \cite{herbertsson2011modelling},
 this is already available in \cite{assaf1984multivariate} 
(see the display following (7)
on page 694) and is given in \cite[display (3.1.1)]{herbertsson2011modelling} as follows:
\begin{equation}\label{e:assafac}
f(t) = (-1)^m \alpha \left( \prod_{n=1}^{m-1} e^{\lambda (\bar{t}_n - \bar{t}_{n-1}) } ( \lambda G_{k_n} - G_{k_n} \lambda) \right) 
e^{ \lambda(\bar{t}_m - \bar{t}_{m-1})} \lambda G_{k_m} {\bf 1},
\end{equation}
for $t \in R_s$ with $|s| = m$; here
$G_k = I_{\Gamma_k^c}$ and $k_n$ is the index for which $t_{k_n}  = \bar{t}_n$ (\cite{herbertsson2011modelling}
uses the letter $Q$ for the rate matrix $\lambda$).
We briefly indicate why \eqref{e:densforabs}  is equivalent to the last formula with
the assumptions of this paragraph, i.e., when
the dynamics of $X$
precludes it to enter more than one of the $\{\Gamma_k\}$ at the same time and in particular when
$|s|$ equals the dimension of $\tau$ (denoted by $m$ in the current paragraph).
Lemma \ref{l:Iaaction} and the
absorbing property of $\Gamma_k$ imply
\begin{align*}
\lambda G_{k} - G_{k} \lambda &= \bm{\lambda}(\Gamma_k,\Gamma_k^c) - \bm{\lambda}(\Gamma_k^c,\Gamma_k)\\
 &=  - \bm{\lambda}(\Gamma_k^c,\Gamma_k).
\end{align*}
On the other hand again Lemma \ref{l:Iaaction} and the absorbing property of $\Gamma_{k}$
imply  $\lambda G_{k} = -\bm{\lambda}(\Gamma_{k}^c ,\Gamma_{k}^c )$.
The row sums of $\lambda$ equal $0$. 
The last two facts imply $\lambda G_{k} {\bf 1} = -\bm{\lambda}(\Gamma_{k}^c, \Gamma_{k}){\bf 1}.$
These imply that one can write \eqref{e:assafac} as 
\[
f(t) =  \alpha \left( \prod_{n=1}^{m} e^{\lambda (\bar{t}_n - \bar{t}_{n-1}) } \bm{\lambda}(\Gamma_{k_n}^c,\Gamma_{k_n})\right)
{\bf 1}.
\]
As we noted above, for $k\neq k'$ the dynamics of $X$ imply that it cannot enter $\Gamma_k$ and $\Gamma_{k'}$ 
at the same time.
Furthermore,
by definition $t_n \neq t_{n'}$  for $n \neq n'$. 
Finally, the initial distribution $\alpha$ is assumed to be such that it puts
zero mass on $\cup_{k\in K}^m \Gamma_k.$ These imply
that
one can replace $\bm{\lambda}(W_n,T_n)$ of \eqref{e:densforabs} with $\bm{\lambda}(\Gamma_{k_n}^c,\Gamma_{k_n})$
(a full argument requires an induction similar to the proof of Proposition \ref{p:densforabs}),
and therefore under the current assumptions the last display and \eqref{e:densforabs} are equal.

\section{Numerical Example}\label{s:numerical}
The state space of our numerical
example is
$\Omega_0 = {\mathbb Z}_3^3$.
For $z \in {\mathbb Z}_3^3$ and $k \in  K  =\{1,2,3\}$
let $z_k$ denote the $k^{th}$ component of $z$.
For the collection $\{\Gamma_k\}$ take
\[
\Gamma_k = \{z: z_k = 0 \}.
\]
$\tau_k$, as before, is the first time the process $X$ hits the set $\Gamma_k$.
The initial distribution
$\alpha$ will be the uniform distribution over the set 
\[
\Omega_0- \bigcup_{k \in K} \Gamma_k  = \left\{z: \min_{k\in K} z_k > 0 \right\}.
\]
We will compute the density of $\tau = (\tau_1,\tau_2,\tau_3)$
over the sets $R_{s_1}$, $R_{s_2}\subset {\mathbb R}_+^3$ 
defined by the partitions
$s_1 = ( \{2,3\},\{1\})$ 
and
$ s_2= (\{1,2,3\})$;
the first corresponds to the event 
$\{ \tau \in R_{s_1} \} = \{\tau_2 < \tau_1 = \tau_3 \}$
and the second to
$\{ \tau \in R_{s_2} \} = 
\{ \tau_1 = \tau_2 = \tau_3\}$.

The dynamics of $X$ on ${\mathbb Z}^3_3$ 
for our numerical example will be that
of a constrained random walk with the following increments:
\begin{equation}\label{e:incs}
\pm e_k, \pm(e_1 + e_2), \pm(e_1 + e_2 + e_3), k \in K,
\end{equation}
where $e_1 \doteq (1,0,0)$, $e_2 \doteq (0,1,0)$ and $e_3 \doteq (0,0,1)$;
the $\{ \Gamma_k \}$ are assumed to be absorbing, i.e., if $X_{u_0} \in \Gamma_k$
any increment involving $\pm e_k$ 
can no longer be an increment of $X$ for $u > u_0$.
The sets $B_k \doteq \{z: z_k = 2\}$ are ``reflecting'' in the sense that 
if  $X_t \in B_k$ for some $t$,
increments involving $+e_k$ cannot be the first 
increment of $X$ in the time interval $[t,\infty)$. 
We assume the following
jump rates for the increments listed in \eqref{e:incs}:
\[
2~, 1~, 2~, 1~, 3~, 1~, 0.5~, 0.5~, 0.2~, 0.2.
\]
These rates and the aforementioned dynamics
give a $27 \times 27$ $\lambda$ matrix. The level sets 
$f(\alpha,\cdot ,K)|_{R_{s_1}}$ are depicted in Figure \ref{f:t2e31} and
the graph of $f(\alpha,\cdot,K)|_{R_{s_2}}$ is depicted in Figure \ref{f:t1e2e3}.

\begin{figure}
\begin{center}
\scalebox{0.5}{
\centerline{\epsfig{file=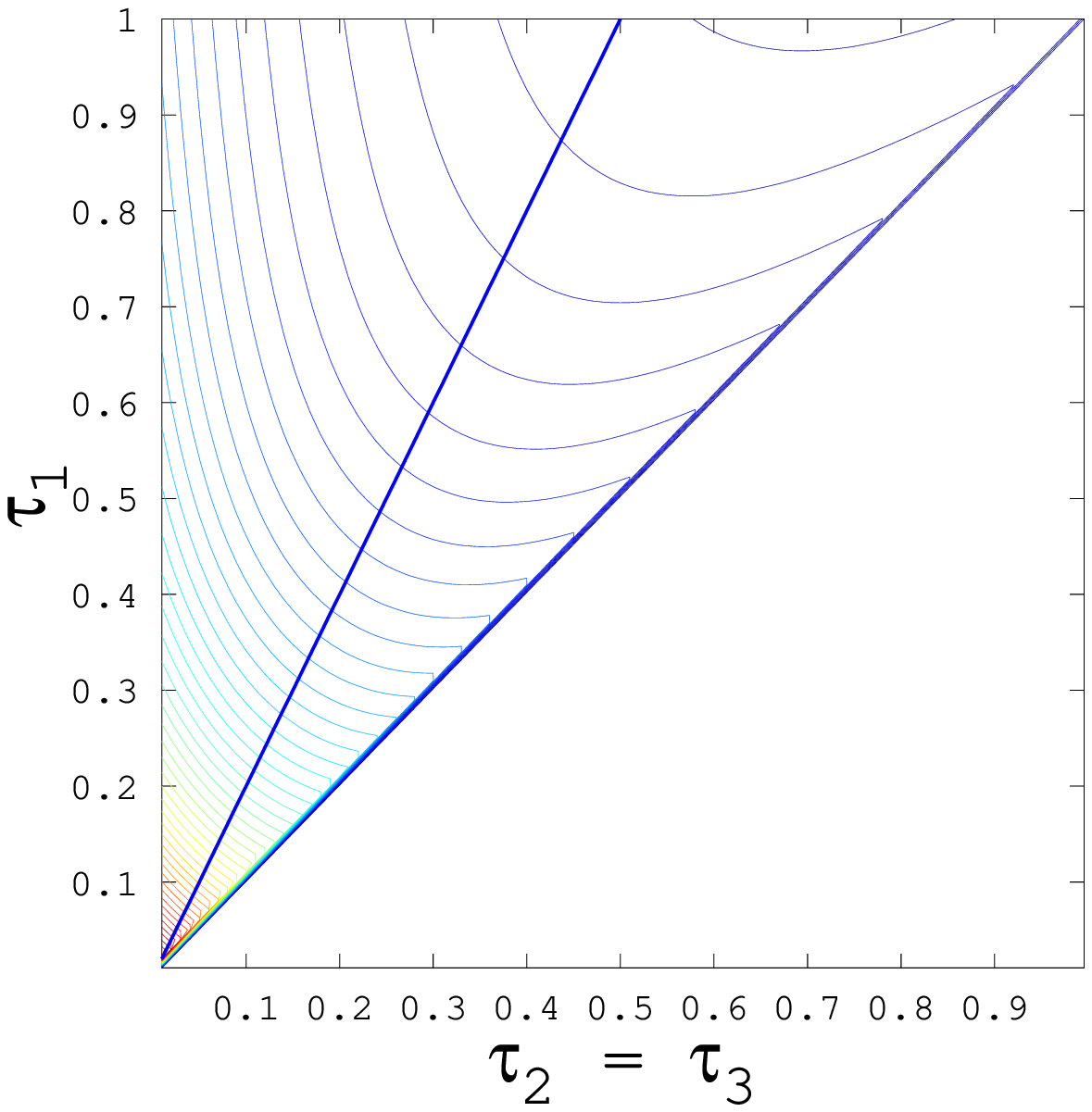} \epsfig{file=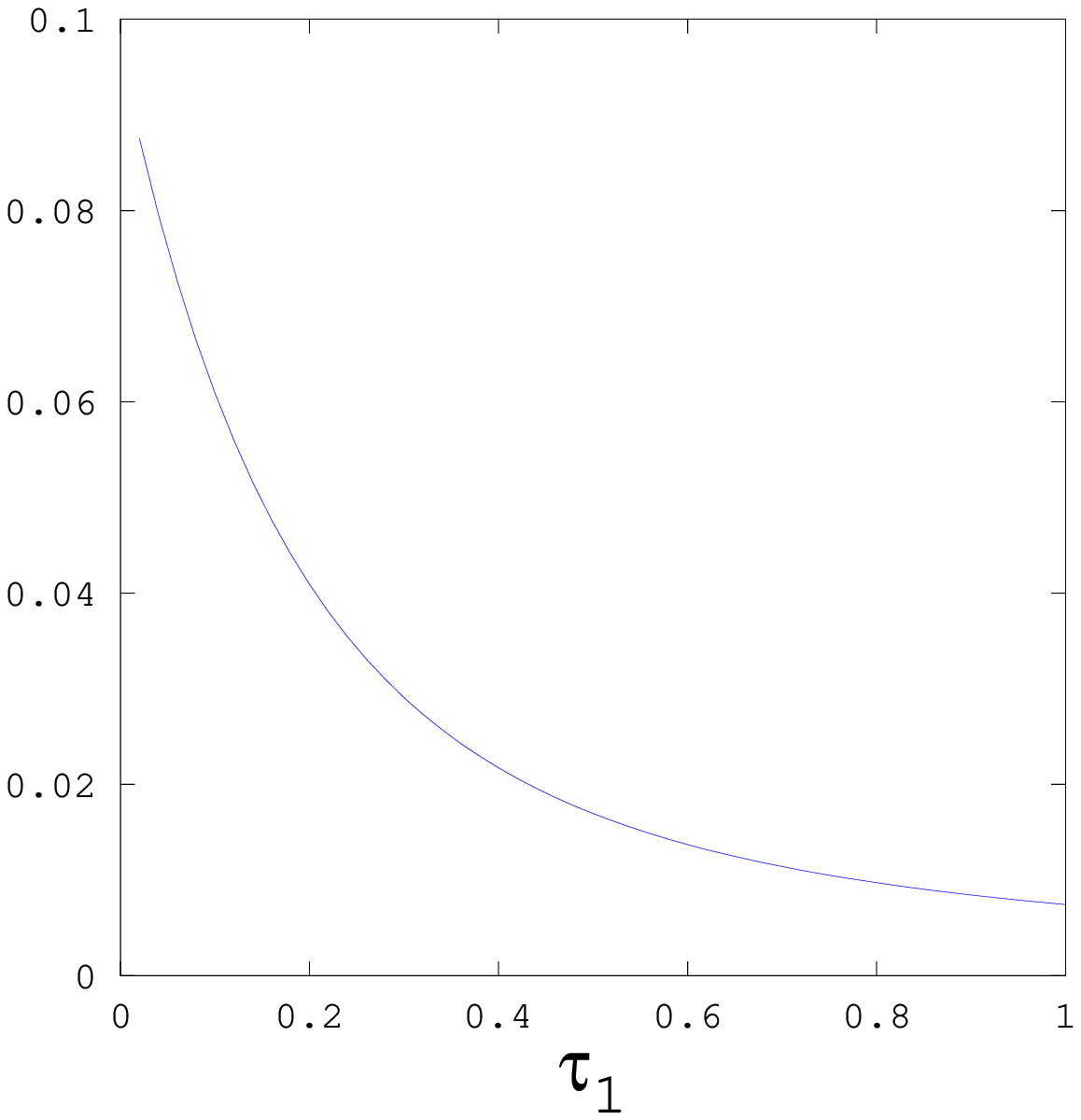}}}
\end{center}

\vspace{-0.8cm}
\caption{The level curves of the density 
$f$ for $\tau_2 = \tau_3  < \tau_1$.
On the right: the values of $f$ over
the line segment connecting $(0,0)$ to  $(0.5,1)$}
\label{f:t2e31}
\end{figure}

\begin{figure}
\begin{center}
\scalebox{0.5}{
\centerline{\epsfig{file=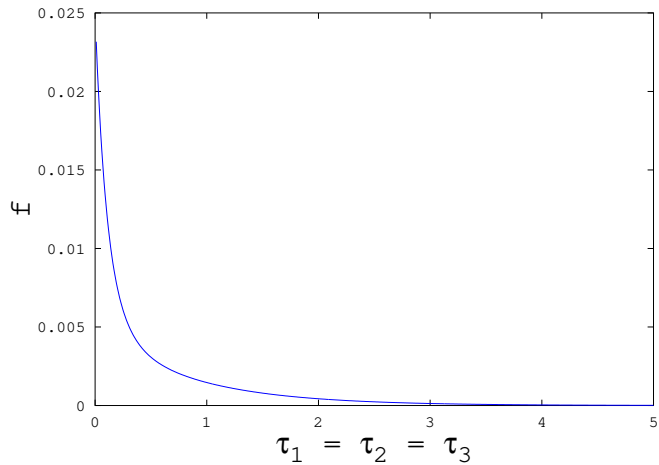} }
}
\end{center}

\vspace{-0.8cm}
\caption{The density $f$ for $\tau_1 = \tau_2 = \tau_3$}
\label{f:t1e2e3}
\end{figure}

For the parameter values of this numerical example, 
$P_\alpha( 
\cap_{k\neq k'} 
\tau_{k} \neq \tau_{k'} 
) = 0.899$ and thus the singular parts 
account for around
$10\%$ of the distribution of $\tau$.

\section{Conclusion}
Our primary motivation in deriving the formulas in the present paper has been
their potential applications to credit risk modeling. Let us
comment on this potentiality starting from the credit risk model of \cite{herbertsson2011modelling}.
With the results in the present work one can extend the modeling approach of
\cite{herbertsson2011modelling} in two directions. Remember that the underlying process in
\cite{herbertsson2011modelling} can only move by increments of $\{-e_k\}$ i.e., the model
assumes that the obligors can default only one at a time. 
However, for highly correlated
obligors it may make sense to allow
simultaneous defaults, i.e., allow increments of the form $-\sum_{n} e_{k_n}$.
Once multiple defaults are allowed the default times will have nonzero singular parts and
the formulas in the present work can be used to compute them, as is done in the numerical example
of Section \ref{s:numerical}. Secondly, the default sets $\{\Gamma_k\}$ no longer have to be assumed
to be absorbing. Thus, with our formulas, one can treat models that allow recovery from default.

As $|\Omega_0|$ increases 
\eqref{e:densityt1tk} and other formulas derived in the present paper can take too long a time to compute
(the same holds for earlier density formulas in the prior literature).
Thus it is of  interest to derive asymptotic
approximations for these densities.

\appendix
\bibliography{phasetype}

\end{document}